\theoremstyle{plain}
\newtheorem{thm}{Theorem}
\newtheorem{lemma}[thm]{Lemma}
\newtheorem{prop}[thm]{Proposition}
\newtheorem{corollary}[thm]{Corollary}
\theoremstyle{definition}
\newtheorem{defi}{Definition}
\newtheorem{example}{Example}
\newtheorem{question}{Question}
\newcommand{\numberset}{\mathbb}
\newcommand{\F}{\numberset{F}}
\newcommand{\Proj}{\numberset{P}}
\definecolor{bigout}{rgb}{0.79, 0.08, 0.48}
\definecolor{dad}{rgb}{0.0, 0.0, 0.9}
\title{On the existence of {\itshape Spot It!} decks that are not projective planes}
\author{Bianca Gouthier\footnote{Institut de Math\'ematiques de Bordeaux, 351, cours de la Libération - F 33 405 Talence, France, bianca.gouthier@math.u-bordeaux.fr }, Daniele Gouthier\footnote{
Scienza Express publishing house; Master in Science Communication "Franco Prattico", International School for Advanced Studies, Trieste, Italy, daniele@scienzaexpress.it}}
\date{}
\begin{document}

\maketitle

\begin{abstract}
    The game {\itshape Spot It!} is played with a deck of cards in which every pair of cards has exactly one matching symbol and the aim is to be the fastest at finding the match. It is known that finite projective planes correspond to decks in which every card contains $n$ symbols and every symbol appears on $n$ cards. In this paper we relax the hypothesis on the number of cards on which a symbol appears: we study symmetric decks in which every symbol appears the same number of times and we introduce the concept of {\itshape maximal} deck, providing a sufficient condition for a deck to have this property. We also produce various examples of interesting decks which do not correspond to projective planes.
\end{abstract}

%\tableofcontents

\section{Introduction}

We consider two systems of axioms: the first one defines a {\itshape projective plane} while the second one defines a {\itshape Spot It!} deck. The game {\itshape Spot It!} is played with a deck of cards, each containing the same number of symbols (all distinct), in which every pair of cards has exactly one matching symbol (the one you should spot!), \cite{CogMay}, \cite{Polster}.
The two systems of axioms, as given in \cite{sengupta}, are the following.

A {\itshape projective plane} is a collection of lines and points satisfying the axioms:
\begin{enumerate}
    \item[PP1.] any two points are on exactly one line;
    \item[PP2.] any two lines have exactly one point in common;
    \item[PP3.] there exist four distinct points such that no three of them are on the same line.
\end{enumerate}
The last axiom is given in order to exclude some degenerate cases (such as the empty set, a single point, a single line etc).
Under these axioms there exist {\itshape finite projective planes}, the most elementary of which is the {\itshape Fano plane}, given by $7$ points and $7$ lines satisfying the above axioms. Here are two representations of it: in the first one rows represent lines and points are named with numbers from $1$ to $7,$ while the second is classical.
\begin{figure}[H]
\centering
\begin{minipage}[c]{.40\textwidth}
 \centering
    $
   \begin{array}{ccc}
5&1&2\\
5&3&4\\
 %&&\\
6&1&3\\
6&2&4\\
%&&\\
7&1&4\\
7&2&3\\
  %& &\\
  5&6&7\\
\end{array}$
\end{minipage}
\begin{minipage}[c]{.40\textwidth}\centering
\includegraphics[width=.50\textwidth]{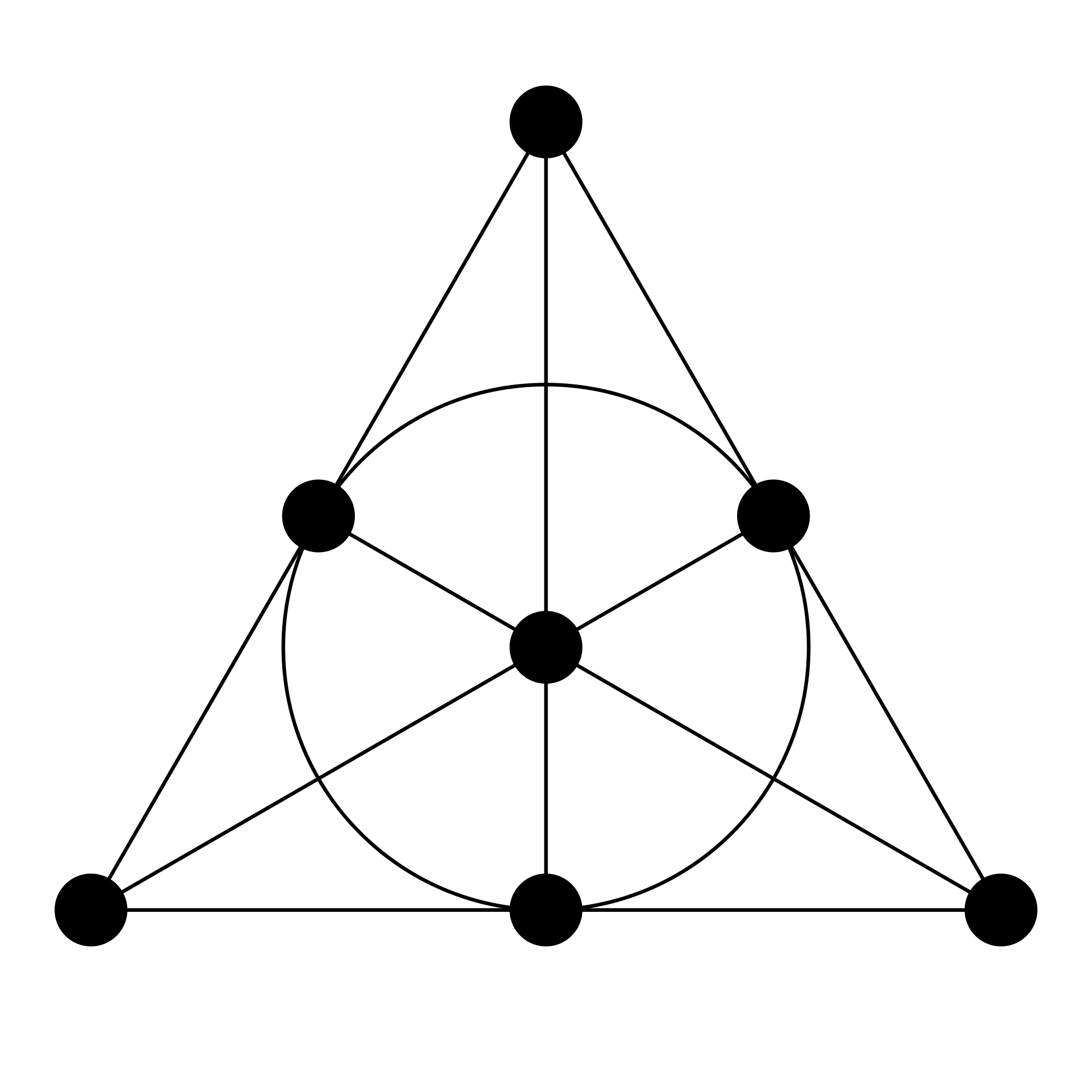}
\end{minipage}
\caption{Two depictions of the Fano plane}
\end{figure}

A {\itshape Spot It!} deck is a collection of cards and symbols satisfying the axioms:
\begin{enumerate}
    \item[D1.] any two cards have exactly one symbol in common;
    \item[D2.] every symbol is on at least two cards;
    \item[D3.] every card contains at least two symbols;
    \item[D4.] all cards contain the same number of symbols (all distinct);
    \item[D5.] at least one symbol exists.
\end{enumerate}

These are the axioms describing all the decks we will encounter in the following of this paper. Notice that axiom D2 tells us that there could not be a unique symbol common to all the cards of the deck: if this was the case, then all other symbols could appear only once and this is in contradiction with the quoted axiom. The non-existence of a symbol common to all the cards of the deck is equivalent to saying that for any symbol there exists a card not containing it.

Coggin and Mayer \cite{CogMay} showed that finite projective planes correspond to {\itshape Spot It!} decks. Thus, in the following of this paper, we will use as synonyms the words {\itshape point/symbol}, {\itshape line/card} and for instance we will say that two symbols are {\itshape aligned} if they belong to the same card. In addition to this, Sengupta \cite{sengupta} modifies the {\itshape Spot It!} axioms with the following (that, altogether, are equivalent to the projective plane axioms PP1, PP2 and PP3):

\begin{enumerate}
    \item[DP1.] any two cards have exactly one symbol in common, (D1);
    \item[DP2.] every symbol is on at least three cards;
    \item[DP3.] every card contains at least three symbols;
    \item[DP4.] all cards contain the same number of symbols, (D4);
    \item[DP5.] at least one symbol exists, (D5);
    \item[DP6.] any two symbols are on exactly one card.
\end{enumerate}
In this way {\itshape Spot It!} decks which are projective planes are described.
So, as already underlined, every finite projective plane is in particular a {\itshape Spot It!} deck, but there are decks which are not projective planes, since the new collection of axioms has stronger requests than the previous one. For example, axiom D2 asks that any symbol appears on at least two cards and not necessarily on $n$ as in a deck corresponding to a projective plane.

In this paper we will describe {\itshape symmetric} decks, in which every symbol is on the same number of cards; {\itshape paired} decks, in which every pair of symbols appears on a card (i.e. they satisfy DP6); and {\itshape maximal} decks, in which it is impossible to add a new card without adding new symbols. Paired decks correspond exactly to finite projective planes.

Via these three type of decks we will get some results on general decks and we will give examples of decks which are not projective planes.

\section{Multiplicities of symbols, order and length of a deck}

We begin by giving the following representation of a deck $D$: consider an integer $\ell,$ the set of symbols $S:=\{s\in\numberset{N}\mid s\leq \ell\}$ and a finite collection of subsets (the cards) of $S$ which realise the deck $D=\{C_1,\dots,C_c\}.$ Any two cards have the same cardinality $|C_i|=|C_j|=n$ (axiom D4) and exactly one symbol in common $|C_i\cap C_j|=1$ (axiom D1).

\begin{defi}
We define {\itshape order} of the deck the cardinality $n$ of each card and {\itshape length} of the deck the cardinality $\ell$ of the set of symbols. Moreover $c$ will denote the number of cards in the deck. 
\end{defi}

As in \cite{Jongsma}, we are interested in the relations between the order of the deck, its length and the number of cards.

\begin{lemma}[Uniqueness of a card]
For any two distinct symbols there is at most one card which contains them both. Thus the following inequality holds true: $$c\binom{n}{2}\leq\binom{\ell}{2}$$
and the equality holds if and only if any pair of distinct symbols belongs to a card, that is for any two distinct symbols there exists a card which contains them both.
\end{lemma}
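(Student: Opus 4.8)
The plan is to prove this by double counting the pairs of symbols, and the logical order matters: the uniqueness statement must come first, since it is exactly what powers both the inequality and the equality characterization. First I would prove uniqueness directly. Suppose two distinct symbols $s,t$ both lie on two distinct cards $C_i$ and $C_j$. Then $\{s,t\}\subseteq C_i\cap C_j$, so $|C_i\cap C_j|\geq 2$, contradicting axiom D1, which forces $|C_i\cap C_j|=1$. Hence every pair of distinct symbols is contained in at most one card.

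Next I would count. For each card $C_i$, since $|C_i|=n$ by axiom D4, there are exactly $\binom{n}{2}$ two-element subsets of $C_i$; call each such subset a pair \emph{covered} by $C_i$. The key observation is that the collections of pairs covered by $C_1,\dots,C_c$ are pairwise disjoint as families of two-element subsets of $S$: a pair covered by two distinct cards would be a pair of symbols lying on two cards, which is impossible by the uniqueness just established. Therefore the total number of pairs of symbols that appear together on some card is exactly $\sum_{i=1}^{c}\binom{n}{2}=c\binom{n}{2}$, and since every such pair is a two-element subset of the $\ell$-element symbol set $S$, this number is at most $\binom{\ell}{2}$, giving the claimed inequality.

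Finally, for the equality case I would simply note that the argument above identifies $c\binom{n}{2}$ not merely as a lower bound but as the \emph{exact} cardinality of the set of pairs of symbols that are aligned (i.e.\ appear together on some card). Consequently $c\binom{n}{2}=\binom{\ell}{2}$ if and only if this set is all of the $\binom{\ell}{2}$ possible pairs, that is, if and only if every pair of distinct symbols belongs to some card (which is then unique, again by the first part).

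I do not expect a genuine obstacle here; the argument is a clean inclusion-exclusion-free count. The only point deserving care is the disjointness of the per-card pair families, which is precisely the content of the uniqueness assertion, so I would make sure to invoke it explicitly rather than treat it as obvious. One could also remark that the inequality is vacuous when $c=0$, but axiom D5 together with D1–D4 makes the interesting regime the one with cards present, so I would not dwell on degenerate cases.
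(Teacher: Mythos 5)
Your proposal is correct and follows essentially the same route as the paper: derive uniqueness from axiom D1, then double-count aligned pairs (using the uniqueness to see the per-card pair families are disjoint) to get $c\binom{n}{2}$ as the exact number of aligned pairs, which is at most $\binom{\ell}{2}$ with equality exactly when every pair is aligned. You simply spell out the disjointness step that the paper leaves implicit.
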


\begin{proof}
The first statement is a straightforward consequence of axiom D1: indeed, otherwise we would find two cards with two symbols in common and this cannot happen. The inequality is just a translation of it: indeed, the left hand side counts the number of pairs of symbols (appearing together on some card) in a deck, while the right hand side counts the number of all possible pairs of symbols. Finally, we have the equality if all possible pairs of symbols appear in the deck.  
\end{proof}

It follows then a natural definition.

\begin{defi}
A deck is {\itshape paired} if any two symbols are on exactly one card, that is $c\binom{n}{2}=\binom{\ell}{2}.$
\end{defi} 

Section \ref{paireddecks} will be devoted to an overview on paired decks.

Following \cite{Heemstra}, given a symbol $s$ we are interested in the set of cards on which it appears and the cardinality of this set.

\begin{defi}
For any symbol $s$ we define the {\itshape star} of centre $s$ to be the set of cards on which $s$ appears: $E(s):=\{C\in D\mid C\ni s\}.$ The {\itshape multiplicity} $m(s)$ of the symbol $s$ is the number of cards on which it appears, that is the cardinality of the set of cards $E(s).$ 
\end{defi}

It is useful to keep in mind the following visualization of a deck. Pick any card $$C=\{s_1,\dots,s_n\}$$ and consider the symbols on it: you can partition the remaining $c-1$ cards in $n$ packs of cards, each of them collecting all the (remaining) cards which have in common one of the $n$ symbols on $C.$ Notice that each pack is exactly the star $E(s_i)$ with the card $C$ removed, $E(s_i)\backslash\{C\}$. Here is an example:
\begin{figure}[H]
    \centering
   \begin{align*}
\begin{array}{ccc}
  5&6&7\\
\end{array}&&
\begin{array}{ccc}
5&1&2\\
5&3&4\\
 \end{array}&&
 \begin{array}{ccc}
6&1&3\\
6&2&4\\
\end{array}&&
\begin{array}{ccc}
7&1&4\\
7&2&3
\end{array}
\end{align*} \caption{A partitioning of the deck corresponding to the Fano plane}
\end{figure}

Notice that by the axiom D2 we have that for any symbol $s$ the multiplicity is greater than or equal to $2,$ $m(s)\geq 2,$ since every symbol appears on at least two cards. Moreover we can count all the different symbols appearing in the star $E(s)$: indeed, each of the $m(s)$ cards in $E(s)$ contains $n$ symbols, of which exactly one is in common, that is $s.$ Therefore in the star $E(s)$ there appear exactly $m(s)(n-1)+1$ different symbols. Remark also that two stars $E(s_1)$ and $E(s_2)$ have at most one card in common, namely, if it exists, the card containing $s_1$ and $s_2.$

Let us consider the sum of the multiplicities of all the symbols in the deck. On one side, by definition, it is $\sum_{s\in S}m(s)$. On the other side, it is the product of the number of cards and the order of the deck, that is $$\sum_{s\in S}m(s)=cn.$$ Indeed both members count all the symbols which appear in the deck, counted, of course, with multiplicity.

\begin{lemma}[Sums of multiplicities]\label{sumsofmult}
For any fixed card $C$ in a deck, it holds $$\sum_{s\in C}m(s)=c+n-1.$$ Moreover, considering every symbol of the deck, it also holds $$\sum_{s\in S}m(s)^2=c(c+n-1).$$
\end{lemma}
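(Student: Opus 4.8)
The plan is to establish both equalities by elementary double counting, building on the partition of the deck described immediately before the statement.

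\textbf{First identity.} I would fix a card $C = \{s_1, \dots, s_n\}$. By axiom D1 every card distinct from $C$ meets $C$ in exactly one symbol, and it cannot meet $C$ in two distinct symbols; hence the $c-1$ cards other than $C$ are partitioned into the $n$ classes $E(s_1) \setminus \{C\}, \dots, E(s_n) \setminus \{C\}$, the $i$-th class consisting of the cards (other than $C$) that contain $s_i$. Since $C$ itself lies in every $E(s_i)$, we have $|E(s_i) \setminus \{C\}| = m(s_i) - 1$, and adding up the sizes of the classes gives $\sum_{i=1}^n (m(s_i) - 1) = c - 1$. Therefore $\sum_{s \in C} m(s) = (c-1) + n = c + n - 1$, and this value does not depend on the chosen card $C$.

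\textbf{Second identity.} I would then sum the first identity over all $c$ cards of the deck. On one side this gives
$$\sum_{C \in D} \sum_{s \in C} m(s) = \sum_{C \in D} (c + n - 1) = c(c + n - 1).$$
On the other side, interchanging the order of summation and using that each symbol $s$ lies in exactly $m(s)$ cards,
$$\sum_{C \in D} \sum_{s \in C} m(s) = \sum_{s \in S} m(s) \cdot \bigl|\{C \in D : s \in C\}\bigr| = \sum_{s \in S} m(s)^2.$$
Comparing the two expressions yields $\sum_{s \in S} m(s)^2 = c(c + n - 1)$, as claimed.

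There is no genuine obstacle here: both parts are pure counting arguments. The only point requiring a little care is the bookkeeping of whether the fixed card $C$ is counted inside each star $E(s_i)$ — it is, which is precisely the origin of the $-1$ in $m(s_i) - 1$ — and the verification that the $n$ classes genuinely partition the remaining $c-1$ cards, which is exactly axiom D1. Note that neither step uses the paired-deck hypothesis, so the statement holds for every deck satisfying D1--D5.
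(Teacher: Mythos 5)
Your proof is correct and follows essentially the same route as the paper: the first identity comes from partitioning the remaining $c-1$ cards into the stars $E(s_i)\setminus\{C\}$ (the paper's ``packs''), and the second comes from summing the first over all cards and interchanging the order of summation. You merely spell out the partition argument more explicitly than the paper, which relies on the visualization described just before the lemma.
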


\begin{proof}
Visualizing the deck partitioned in packs as we just described, one has that the number of cards is exactly $$c=\sum_{s\in C}(m(s)-1)+1,$$ where $C$ is any fixed card and thus the first equality follows. Now we sum it over all the cards of the deck. Since the second member does not depend on the choice of the card, we obtain $$\sum_{C\in D}\sum_{s\in C}m(s)=c(c+n-1).$$ In the sum of sums we are writing $m(s)$ times each multiplicity $m(s).$ Hence $$\sum_{C\in D}\sum_{s\in C}m(s)=\sum_{s\in S}m(s)^2$$ and also the second equality follows.
\end{proof}

The following result tells us that the algebraic mean of the multiplicities of all the symbols in a deck is less than or equal to the algebraic mean on a single card. This proof was pointed out to us by Francesco Viganò and simplified our reasoning in a previous version of this paper.

\begin{corollary}\label{inequality}
The following inequality holds true:
\begin{equation*}
   \frac{cn}{\ell}\leq\frac{c+n-1}{n}. 
\end{equation*}

\end{corollary}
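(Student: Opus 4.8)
The plan is to deduce this purely from the two identities in Lemma \ref{sumsofmult} together with the identity $\sum_{s\in S}m(s)=cn$, using the Cauchy--Schwarz inequality (equivalently, the quadratic-mean/arithmetic-mean inequality) applied to the vector of multiplicities $\bigl(m(s)\bigr)_{s\in S}$, which has $\ell$ entries.

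First I would recall that summing over all $\ell$ symbols of the deck gives $\sum_{s\in S}m(s)=cn$ and, by Lemma \ref{sumsofmult}, $\sum_{s\in S}m(s)^2=c(c+n-1)$. Then I would apply Cauchy--Schwarz in the form
\begin{equation*}
\Bigl(\sum_{s\in S}m(s)\Bigr)^2\leq \ell\sum_{s\in S}m(s)^2,
\end{equation*}
which substitutes to $(cn)^2\leq \ell\, c(c+n-1)$. Since $c$, $n$, $\ell$ are all positive, I would divide both sides by $\ell c n$ to obtain exactly $\dfrac{cn}{\ell}\leq\dfrac{c+n-1}{n}$.

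I do not expect any real obstacle here: the statement is an immediate consequence of the previously established counting identities and a standard inequality, so the "hard part" is only the bookkeeping of which sum has $\ell$ terms (the sum over $S$, not the sum over a single card). It is worth noting, as a closing remark, that equality in Cauchy--Schwarz holds precisely when all multiplicities $m(s)$ are equal, i.e.\ exactly for symmetric decks; this ties the corollary to the notion of symmetric deck introduced in the paper, and one could phrase the corollary as: the global average multiplicity $cn/\ell$ never exceeds the per-card average $(c+n-1)/n$, with equality iff the deck is symmetric.
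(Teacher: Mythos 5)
Your proof is correct and is essentially the same as the paper's: the paper phrases the key step as the non-negativity of the variance of the multiplicities, which is exactly the Cauchy--Schwarz inequality $\bigl(\sum_{s\in S}m(s)\bigr)^2\leq\ell\sum_{s\in S}m(s)^2$ that you invoke, applied to the same two identities $\sum_{s\in S}m(s)=cn$ and $\sum_{s\in S}m(s)^2=c(c+n-1)$. Your closing remark on the equality case (all multiplicities equal, i.e.\ symmetric decks) also matches the paper's observation.
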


\begin{proof}
If we view the multiplicity as a random variable, then the expected value is the given by the algebraic mean $$\frac{1}{\ell}\sum_{s\in S}m(s)=\frac{cn}{\ell}$$ while the variance is $$\frac{1}{\ell}\sum_{s\in S}\left(m(s)-\frac{cn}{\ell}\right)^2.$$ Using the well known fact that the variance is always non-negative, one finds the claimed inequality. Moreover, we see that the equality holds true if and only if $m(s)=\frac{cn}{\ell}$ for all $s\in S.$
\end{proof}

Looking at each single multiplicity we have two upper bounds.

\begin{thm}[Bounds for multiplicities]\label{boundsmult}
For any symbol $s,$ its multiplicity satisfies the following inequalities: $$m(s)\leq n$$ and $$m(s)\leq \frac{\ell-1}{n-1}.$$
\end{thm}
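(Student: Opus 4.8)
For the first inequality, the plan is to compare the star $E(s)$ against a single card that avoids $s$. By axiom D2 (as remarked after the axioms) there exists a card $C$ with $s\notin C$. Each of the $m(s)$ cards in $E(s)$ meets $C$ in exactly one symbol, by axiom D1. I claim these $m(s)$ symbols of $C$ are pairwise distinct: if two distinct cards $C_1,C_2\in E(s)$ met $C$ in the same symbol $t$, then $\{s,t\}\subseteq C_1\cap C_2$ with $s\neq t$ (since $s\notin C$), contradicting the uniqueness of a card (two cards share at most one symbol). Hence $E(s)$ determines $m(s)$ distinct symbols inside $C$, so $m(s)\le |C|=n$.

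For the second inequality, the plan is simply to reuse the count of symbols appearing in a star that was carried out just before Lemma \ref{sumsofmult}: the star $E(s)$ contains exactly $m(s)(n-1)+1$ distinct symbols. Since all of these lie in the set $S$ of size $\ell$, we get $m(s)(n-1)+1\le \ell$, and rearranging gives $m(s)\le \frac{\ell-1}{n-1}$ (note $n\ge 2$ by axiom D3, so dividing is legitimate).

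There is no serious obstacle here; both bounds are short counting arguments. The only points requiring a little care are the two facts that are taken from the surrounding text rather than proved afresh: first, the existence of a card not containing $s$ (which is exactly the content of axiom D2 together with the remark that no symbol lies on every card), and second, the distinctness of the symbols in which the cards of $E(s)$ meet the avoiding card $C$, which is a direct application of the uniqueness-of-a-card lemma. If one wanted a self-contained argument for the second bound without invoking the earlier count, one could instead observe that the cards of $E(s)$ pairwise intersect exactly in $\{s\}$, so the sets $C_i\setminus\{s\}$ for $C_i\in E(s)$ are disjoint subsets of $S\setminus\{s\}$, each of size $n-1$, giving the same inequality.
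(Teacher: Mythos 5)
Your proposal is correct and follows essentially the same route as the paper: the first bound by intersecting the star $E(s)$ with a card avoiding $s$ and noting the intersection symbols are distinct (the paper phrases this as a contradiction starting from $m(s)=n+1$, yours is the direct version), and the second bound by the count $m(s)(n-1)+1\le\ell$ of symbols in a star. No gaps.
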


\begin{proof}
Assume that there exists a symbol $s$ appearing on $n+1$ cards and let $C$ be a card not containing $s$ (recall that such a card always exists thanks to axiom D2). Then $C$ has exactly one symbol in common with the $n+1$ cards considered and moreover these symbols must all be different since the cards already have $s$ in common. This leads to a contradiction since $C$ has exactly $n$ symbols and thus the first inequality holds.

We know that the star of centre $s$ contains $m(s)(n-1)+1$ symbols and this number cannot be greater than the length $\ell$ of the deck, hence the second inequality.
\end{proof}

Let us denote by $\mu$ and $M$ the minimum and the maximum of the multiplicities respectively: $$\mu:=\min_{s\in S}m(s),$$ $$M:=\max_{s\in S}m(s).$$

\begin{thm}[Bounds for the number of cards]
The following inequalities hold: $$c\leq\ell$$ and $$\mu\ell\leq cn\leq M\ell.$$
\end{thm}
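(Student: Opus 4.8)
The plan is to derive both claims from the counting identity $\sum_{s\in S}m(s)=cn$ established just before Lemma~\ref{sumsofmult}, together with the pointwise bounds on the multiplicities; nothing is needed beyond summing inequalities over the $\ell$ symbols.

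For the chain $\mu\ell\le cn\le M\ell$ I would argue as follows. The deck has exactly $\ell$ symbols, and by the definitions of $\mu$ and $M$ every symbol $s$ satisfies $\mu\le m(s)\le M$. Summing these $\ell$ inequalities gives
$$\mu\ell\ \le\ \sum_{s\in S}m(s)\ \le\ M\ell,$$
and the middle term equals $cn$ by the identity recalled above. That settles this part.

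For $c\le\ell$ I would invoke the first bound of Theorem~\ref{boundsmult}, namely $m(s)\le n$ for every symbol $s$. Summing over the $\ell$ symbols,
$$cn=\sum_{s\in S}m(s)\le n\ell,$$
and since each card contains at least two symbols (axiom D3) the order $n$ is positive, so dividing by $n$ yields $c\le\ell$. The only points requiring a little care are that one must use the bound $m(s)\le n$ (the second bound $m(s)\le\frac{\ell-1}{n-1}$ of Theorem~\ref{boundsmult} does not suffice on its own, as it only gives $c\le\frac{\ell(\ell-1)}{n(n-1)}$) and that $n\neq 0$ so the division is legitimate; there is no genuine obstacle here. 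For context one may observe that $c\le\ell$ is also an instance of the de Bruijn--Erd\H{o}s theorem applied to the dual incidence structure whose points are the cards and whose lines are the stars $E(s)$: this is a linear space (any two cards lie on exactly one common star, by axiom D1) in which no line contains all points (no symbol lies on every card, as noted in the introduction), hence it has at least as many lines as points, i.e.\ $\ell\ge c$. The elementary count above is, however, shorter.
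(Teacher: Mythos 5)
Your proof is correct and follows essentially the same route as the paper: both parts are obtained by summing the pointwise bounds $\mu\le m(s)\le M$ and $m(s)\le n$ over all $\ell$ symbols and comparing with the identity $\sum_{s\in S}m(s)=cn$. The remark about the de Bruijn--Erd\H{o}s theorem is a nice aside but not needed; the core argument matches the paper's.
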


\begin{proof}
We already know that $\sum_{s\in S}m(s)=cn$ and that $m(s)\leq n$ for every symbol $s.$ Thus $$\sum_{s\in S}m(s)\leq\ell n$$ and it follows that the number of cards is less than or equal to the number of symbols (the length) of the deck.

Starting from the same equality we may now use the relations $\mu\leq m(s)\leq M$ which hold true for every symbol $s,$ obtaining $\mu\ell\leq cn\leq M\ell$ as wished.

In some situations the following equivalent inequalities may be useful: $$\frac{cn}{M}\leq\ell\leq\frac{cn}{\mu}.$$
\end{proof}

Let us conclude this section with an example.

\begin{example}\label{2mult}
Consider a deck in which the symbols have just two different multiplicities $\mu$ and $M.$ Consider any card $C$ and let $n_\mu(C)$ and $n_M(C)$ be the number of symbols with multiplicity $\mu$ and $M$ on $C$ respectively. We thus have the equations $$\left\{\begin{array}{l}
     \mu n_\mu(C)+Mn_M(C)=c+n-1\\
     n_\mu(C)+n_M(C)=n
\end{array}\right.$$ whose solution is
$$\left\{\begin{array}{l}
     n_\mu(C)=\frac{Mn-c-n+1}{M-\mu}\\
     n_M(C)=\frac{c+n-1-\mu n}{M-\mu}
\end{array}\right.$$ which do not depend on the given card $C.$ So we may conclude that in a deck with just two multiplicities the number of symbols on a card with a fixed multiplicity is constant for all cards.
\end{example}

\section{The fundamental number of a deck}

In literature, it is well known that in a deck of order $n$ which is a finite projective plane there are exactly $n^2-n+1$ symbols (points) and $n^2-n+1$ cards (lines). Such a value plays a role even in the geometry of a generic deck (not necessarily a projective plane).

\begin{defi}
We will denote $\Delta_n$ the number $n^2-n+1$ and call it {\itshape fundamental number} of the deck.
\end{defi}

The fundamental number is related to the multiplicities of the symbols.

\begin{thm}\label{lenghteqfdnb}
If there exists a symbol $s$ with multiplicity equal to $n$, $m(s)=n,$ then the length of the deck is exactly its fundamental number $\ell=\Delta_n.$
\end{thm}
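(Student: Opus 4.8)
The plan is to use the star $E(s)$ of the given symbol $s$ as a kind of coordinate frame for the whole deck. First I would recall the count already made in the text: since $m(s)=n$ and any two cards through $s$ meet only in $s$, the star $E(s)$ involves exactly $m(s)(n-1)+1 = n(n-1)+1 = \Delta_n$ distinct symbols. This immediately gives $\ell \geq \Delta_n$, so the whole content of the theorem is the reverse inequality: no symbol of the deck lies outside this set of $\Delta_n$ symbols.

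To get that, I would take an arbitrary symbol $t$ and, invoking axiom D2, a card $C$ containing $t$. If $C$ is one of the $n$ cards of $E(s)$ there is nothing to prove, so assume $C \notin E(s)$; in particular $s \notin C$. By axiom D1, $C$ meets each of the $n$ cards of $E(s)$ in exactly one symbol, and since any two cards of the star share only $s$ and $s \notin C$, these $n$ intersection symbols are pairwise distinct. Hence $C$ contains at least $n$ symbols that already appear in $E(s)$; but $|C| = n$ by axiom D4, so \emph{every} symbol of $C$, and in particular $t$, lies among the $\Delta_n$ symbols of the star.

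Since $t$ was arbitrary, the symbol set $S$ coincides with the set of symbols occurring in $E(s)$, which has cardinality $\Delta_n$; therefore $\ell = \Delta_n$. I do not expect a real obstacle here: the degenerate case $c = n$ (no card outside the star) is absorbed by the same dichotomy, and the only genuinely load-bearing point is the distinctness of the $n$ intersection symbols, which is precisely where the hypothesis $s \notin C$ — equivalently $C \notin E(s)$ — is used. The implicit appeal to D2 guarantees that every symbol really does occur on some card, so that the argument applies to all of $S$.
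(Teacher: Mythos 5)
Your proposal is correct and follows essentially the same route as the paper: count the $\Delta_n$ distinct symbols in the star $E(s)$, then show that any card outside $E(s)$ meets the $n$ cards of the star in $n$ pairwise distinct symbols, so all of its symbols already occur in the star. Your version is slightly more explicit about quantifying over an arbitrary symbol $t$ and handling the case where every card lies in the star, but the underlying argument is identical.
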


\begin{proof}
By the second inequality of Theorem \ref{boundsmult}, if $s$ is a symbol of multiplicity $n,$ then $$n=m(s)\leq\frac{\ell-1}{n-1}$$ and thus $$n(n-1)+1=\Delta_n\leq\ell.$$ Since $m(s)=n,$ the number of different symbols appearing in the star $E(s)$ is $\Delta_n.$
Now, take a card $C$ not containing $s$ (recall that such a card always exists by axiom D2 as explained in the introduction). This card intersects each of the $n$ cards of $E(s)$ in exactly one symbol and, since $C$ cannot intersect them in $s,$ it intersects each card of $E(s)$ in a different symbol. Therefore, the $n$ symbols appearing on $C$ were already listed in the star. Hence $\ell=\Delta_n,$ as wished.
\end{proof}

\begin{thm}
If $\ell>\Delta_n,$ then any symbol admits a not-aligned one. In particular it holds that $$c\binom{n}{2}<\binom{\ell}{2}.$$
\end{thm}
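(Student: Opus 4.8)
The plan is to prove the first assertion by contradiction and then to read off the strict inequality from the Uniqueness of a card lemma. First I would suppose, aiming for a contradiction, that there is a symbol $s$ that is aligned with \emph{every} other symbol of the deck; the goal is to squeeze out an upper bound on $\ell$ that clashes with the hypothesis $\ell>\Delta_n$.

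The engine of the argument is the count of symbols inside a star, recalled just before Theorem~\ref{boundsmult}: the star $E(s)$ contains exactly $m(s)(n-1)+1$ distinct symbols. Now, if $s$ is aligned with every other symbol $s'$, then by definition there is a card containing both $s$ and $s'$, and this card lies in $E(s)$; hence $s'$ is among the symbols appearing in $E(s)$. Since $s$ itself obviously appears in $E(s)$, all $\ell$ symbols of the deck occur in $E(s)$, which forces $\ell\le m(s)(n-1)+1$. Applying the bound $m(s)\le n$ from Theorem~\ref{boundsmult} yields $\ell\le n(n-1)+1=\Delta_n$, contradicting $\ell>\Delta_n$. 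Therefore every symbol admits a not-aligned one.

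For the ``in particular'' clause I would invoke the Uniqueness of a card lemma, which gives $c\binom{n}{2}\le\binom{\ell}{2}$ with equality precisely when every pair of distinct symbols lies on a common card. We have just exhibited a pair of symbols that lies on no card — namely $s$ together with any symbol not aligned with it — so the equality case is excluded and the inequality is strict, $c\binom{n}{2}<\binom{\ell}{2}$.

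The only real subtlety, and thus the main obstacle, is the observation that ``$s$ is aligned with every other symbol'' is exactly the statement that all $\ell$ symbols of the deck sit inside the single star $E(s)$; once this is seen, the previously established cardinality $m(s)(n-1)+1$ of a star together with $m(s)\le n$ closes the argument immediately, and the rest is routine bookkeeping.
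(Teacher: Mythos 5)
Your argument is correct and is essentially the paper's proof: both rest on the fact that the star $E(s)$ contains exactly $m(s)(n-1)+1\leq n(n-1)+1=\Delta_n<\ell$ symbols, so some symbol lies outside it, and then the strict inequality follows from the equality case of the Uniqueness of a card lemma. The only difference is cosmetic — you phrase it as a contradiction while the paper argues directly for each symbol $s$.
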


\begin{proof}
Suppose that $\ell>\Delta_n.$ For any symbol $s$ the star $E(s)$ contains $$m(s)(n-1)+1\leq\Delta_n<\ell$$ different symbols, that is there exists at least one symbol not aligned to $s.$ Therefore $$c\binom{n}{2}<\binom{\ell}{2}$$ since this inequality is equivalent to the existence of at least two non-aligned symbols.
\end{proof}

\begin{thm}
As above, let us denote $\mu$ and $M$ the minimum and the maximum multiplicities respectively. The following facts hold true:
\begin{enumerate}
    \item $n+1\leq n(\mu-1)+1\leq c\leq n(M-1)+1\leq\Delta_n;$
    \item $c\leq n(M-1)+1\leq M(n-1)+1\leq\ell;$
    \item there exists a multiplicity common to at least $n+1$ symbols.
\end{enumerate}
\end{thm}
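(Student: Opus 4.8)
The plan is to obtain the three inequality chains almost entirely from the tools already developed, reserving a short pigeonhole argument for the last item.

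For item 1, I would fix an arbitrary card $C$ and rewrite the first identity of Lemma \ref{sumsofmult} as $\sum_{s\in C}(m(s)-1)=c-1$. Since $\mu\le m(s)\le M$ for every symbol and $|C|=n$, this sandwiches $c-1$ between $n(\mu-1)$ and $n(M-1)$, that is $n(\mu-1)+1\le c\le n(M-1)+1$. The extreme-left inequality $n+1\le n(\mu-1)+1$ is merely a restatement of $\mu\ge 2$, which is axiom D2, and the extreme-right inequality $n(M-1)+1\le\Delta_n$ comes from substituting the bound $M\le n$ of Theorem \ref{boundsmult} into $n^2-n+1=\Delta_n$.

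Item 2 then follows with no new ideas: its first inequality is part of item 1; its middle inequality $n(M-1)+1\le M(n-1)+1$ reduces after cancellation to $M\le n$ (Theorem \ref{boundsmult} again); and its last inequality $M(n-1)+1\le\ell$ is precisely the second bound of Theorem \ref{boundsmult} applied to a symbol realising the maximal multiplicity $M$ (here one uses $n\ge 2$, so that $n-1>0$).

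For item 3 I would argue by contradiction. Suppose every multiplicity value is shared by at most $n$ symbols. Since all multiplicities are integers in $\{\mu,\dots,M\}$, at most $M-\mu+1$ distinct values occur, so $\ell\le n(M-\mu+1)$. Combining this with the lower bound $\ell\ge M(n-1)+1$ from item 2 gives $M(n-1)+1\le n(M-\mu+1)$, which simplifies to $n(\mu-1)\le M-1$; but $\mu\ge 2$ makes the left-hand side at least $n$, while $M\le n$ makes the right-hand side at most $n-1$, a contradiction. The only real obstacle is recognising that the three facts to feed into the pigeonhole step are exactly the $\ell$-lower bound of item 2 together with $\mu\ge 2$ and $M\le n$; once those are in hand the computation is immediate.
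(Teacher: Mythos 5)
Your proof is correct and follows essentially the same route as the paper: item 1 from the identity $\sum_{s\in C}(m(s)-1)=c-1$ together with $2\leq\mu\leq m(s)\leq M\leq n$, item 2 from $M\leq n$ and the star-size bound, and item 3 by pigeonhole on the number of distinct multiplicity values. The only cosmetic difference is that in item 3 the paper bounds the number of values by $M-1$ and compares against $\ell\geq n(M-1)+1$, while you use $M-\mu+1$ values and $\ell\geq M(n-1)+1$; both computations close the argument.
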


\begin{proof}\leavevmode
\begin{enumerate}
    \item By the first equality of Lemma \ref{sumsofmult} we know that the number of cards is $$c=\sum_{s\in C}(m(s)-1)+1,$$ where $C$ is any fixed card. Then $1.$ follows by using the inequalities $2\leq\mu\leq m(s)\leq M\leq n$ for all $s.$
    \item Since $M\leq n,$ $$n(M-1)+1=nM-n+1\leq nM-M+1=M(n-1)+1$$ and the latter is the cardinality of a star whose centre has multiplicity $M$ and thus the claimed inequalities hold.
    \item Observe that $2\leq\mu\leq M,$ hence in a deck there exist at most $M-1$ different multiplicities. Since $n(M-1)+1\leq\ell$ then by the pigeonhole principle at least $n+1$ symbols must have the same multiplicity, henceforth $3.$
\end{enumerate}
\end{proof}

\begin{corollary}
One has the equality $M(n-1)+1=\ell$ if and only if $M=n$ and this is also equivalent to the existence of a symbol $\tilde{s}$ aligned to all the other symbols.
\end{corollary}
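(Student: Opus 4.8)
The plan is to close the cycle of implications
$$M=n\ \Longrightarrow\ M(n-1)+1=\ell\ \Longrightarrow\ (\exists\,\tilde s\text{ aligned to all the other symbols})\ \Longrightarrow\ M=n,$$
which makes the three conditions equivalent. The key simplification is that Theorem~\ref{boundsmult} already hands us, for free, the inequalities $M\le n$ and $M(n-1)+1\le\ell$; hence at every step only the reverse direction (or the correspondingly stronger conclusion) has to be produced.

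For the first implication I would simply invoke Theorem~\ref{lenghteqfdnb}: if $M=n$ there is a symbol $\tilde s$ with $m(\tilde s)=n$, whence $\ell=\Delta_n=n(n-1)+1=M(n-1)+1$. For the second implication, pick $\tilde s$ with $m(\tilde s)=M$; as observed just before Lemma~\ref{sumsofmult}, its star $E(\tilde s)$ contains exactly $m(\tilde s)(n-1)+1=M(n-1)+1$ distinct symbols, and under the hypothesis this equals $\ell$, so $E(\tilde s)$ contains every symbol of the deck, i.e.\ $\tilde s$ is aligned to all the other symbols. Here one also tacitly uses the standing remark (following from D2) that the deck has more than one symbol, so that ``aligned to all the other symbols'' is a meaningful condition.

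The only step with real content is the last one. Given a symbol $\tilde s$ aligned to all the others, use axiom~D2 (as recalled in the introduction, no symbol lies on every card) to pick a card $C$ not containing $\tilde s$. Each of the $n$ symbols on $C$ is distinct from $\tilde s$, hence aligned to it, hence lies on some card of $E(\tilde s)$; moreover two distinct symbols of $C$ cannot lie on one and the same card $D\in E(\tilde s)$, since then $D$ and $C$ would share two symbols, contradicting D1---and $D\neq C$ because $\tilde s\in D\setminus C$. Thus the $n$ symbols of $C$ are spread over $n$ pairwise distinct cards of $E(\tilde s)$, which forces $m(\tilde s)\ge n$; combined with $m(\tilde s)\le M\le n$ from Theorem~\ref{boundsmult}, this yields $M=m(\tilde s)=n$ and closes the cycle. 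I expect this pigeonhole-type argument on the card $C$ to be the main (indeed essentially the only) obstacle; everything else is bookkeeping with results already established.
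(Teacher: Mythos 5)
Your proof is correct and uses essentially the same ingredients as the paper: Theorem \ref{lenghteqfdnb} for $M=n\Rightarrow M(n-1)+1=\ell$, the count $m(s)(n-1)+1$ of symbols in a star for the link with alignment, and the ``card outside the star meets each card of $E(\tilde s)$ in a distinct symbol'' argument for the remaining implication. The only difference is organizational: you close a single cycle of three implications, whereas the paper proves the first equivalence by contraposition and then handles the alignment condition separately; your version arguably makes the third equivalence more explicit, but the underlying arguments coincide.
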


\begin{proof}
The if part is Theorem \ref{lenghteqfdnb}. Conversely, if $M<n,$ let $s$ be a symbol of multiplicity $M$ and $C$ be a card not belonging to $E(s),$ which has exactly $M(n-1)+1$ different symbols. The card $C$ intersects each of the $M$ cards of the star $E(s)$ in a different symbol and since on each card there are $n>M$ symbols, there must be a symbol on $C$ that did not already appear in $E(s).$
For the last statement, notice that a symbol $\tilde{s}$ is aligned to all the other symbols if and only if the star $E(\tilde{s})$ exhausts all the symbols, that is $m(\tilde{s})(n-1)+1=\ell.$
\end{proof}

We still don't know if one of the inequalities $m(s)\leq n$ and $m(s)\leq \frac{\ell-1}{n-1}$, for any symbol $s,$ is finer than the other. Equivalently, we are interested in knowing if there is a relation between the length of a deck and the fundamental number. We know that if there exists a symbol of multiplicity $n,$ then the length is equal to the fundamental number. Moreover we have the following:

\begin{lemma}
Take $kn+2$ cards in a deck of order $n$, with $k$ a positive integer. Then on each of these cards there exists a symbol belonging to $k+2$ of them.
\end{lemma}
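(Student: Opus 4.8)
The plan is to adapt the visualization of a deck partitioned into ``packs'' (the one used just before Lemma~\ref{sumsofmult}) to the chosen sub-collection of $kn+2$ cards, and then to invoke the pigeonhole principle.

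First I would fix an arbitrary card $C$ among the $kn+2$ chosen ones and write $C=\{s_1,\dots,s_n\}$. By axiom D1 each of the other $kn+1$ chosen cards meets $C$ in exactly one symbol, so these $kn+1$ cards can be sorted into $n$ packs, the $i$-th pack consisting of the chosen cards whose common symbol with $C$ is $s_i$. This uses nothing about the rest of the deck, only the $kn+2$ cards we singled out.

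Next I would apply the pigeonhole principle to this partition: distributing $kn+1$ cards among $n$ packs forces at least one pack to contain $k+1$ cards, since otherwise the total would be at most $kn<kn+1$. Hence there is an index $i$ such that $s_i$ is the common symbol of $C$ with at least $k+1$ of the remaining chosen cards. Counting $C$ itself, the symbol $s_i$ then lies on at least $k+2$ of the $kn+2$ chosen cards, which is the assertion for $C$; as $C$ was arbitrary, the lemma follows.

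I do not expect a genuine obstacle here: the only delicate point is the pigeonhole count, namely checking that $kn+1$ cards really do force a pack of size $k+1$ and not merely $k$. This is precisely what fails if one starts from only $kn+1$ cards, and it is what the hypothesis ``$kn+2$'' is tuned to guarantee.
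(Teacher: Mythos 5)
Your argument is correct and is essentially identical to the paper's own proof: fix a card $C$, sort the remaining $kn+1$ chosen cards into $n$ packs according to their unique common symbol with $C$, and apply the pigeonhole principle to find a pack of size at least $k+1$, whose symbol then lies on $k+2$ of the chosen cards once $C$ is counted. Nothing further is needed.
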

\begin{proof}
Lets' pick a card $C$. Any of the others $kn+1$ intersects $C$ in exactly one symbol. So, by the pigeonhole principle there exist $k+1$ of these $kn+1$ cards intersecting $C$ in a same symbol, which belongs to $k+2$ cards in the picked ones.
\end{proof}

\begin{corollary}
If a deck of order $n$ has at least $n^2-2n+2$ cards, then on any card there is a symbol of multiplicity $n.$
\end{corollary}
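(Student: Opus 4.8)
The plan is to obtain this statement as an immediate consequence of the preceding Lemma. First I would record the arithmetic observation that
\[
n^2-2n+2=(n-2)n+2,
\]
so that, setting $k:=n-2$, the quantity $kn+2$ equals exactly $n^2-2n+2$, while $k+2=n$.

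Assuming $n\geq 3$ (so that $k=n-2$ is a positive integer), I would then fix an arbitrary card $C$ of the deck. Since by hypothesis the deck has at least $n^2-2n+2=kn+2$ cards, I can select a sub-collection of exactly $kn+2$ cards that contains $C$. Applying the preceding Lemma to this sub-collection produces a symbol $s$ lying on $C$ that belongs to $k+2=n$ of the selected cards; in particular $m(s)\geq n$. Combining this with the bound $m(s)\leq n$ from Theorem~\ref{boundsmult} forces $m(s)=n$, so $C$ carries a symbol of multiplicity $n$, which is exactly the claim.

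Finally I would dispose of the degenerate small orders: for $n=1$ the axioms D1--D2 admit no deck at all, while for $n=2$ every symbol already has multiplicity at least $2$ by axiom D2 and at most $2$ by Theorem~\ref{boundsmult}, hence every symbol on every card has multiplicity $n=2$ unconditionally, so the statement is trivially true there.

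I do not expect a genuine obstacle: the entire content is the identity $n^2-2n+2=(n-2)n+2$ and the reduction to the preceding Lemma, followed by a one-line appeal to the upper bound $m(s)\leq n$. The only point needing a moment of care is that $k=n-2$ must be a positive integer for the Lemma to apply, which is precisely why the cases $n\leq 2$ are treated separately (and there they are immediate).
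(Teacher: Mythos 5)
Your proposal is correct and is exactly the argument the paper intends: the corollary is stated without proof as an immediate consequence of the preceding Lemma, obtained by taking $k=n-2$ (so that $kn+2=n^2-2n+2$ and $k+2=n$) and combining the resulting lower bound $m(s)\geq n$ with the upper bound $m(s)\leq n$ from Theorem~\ref{boundsmult}. Your extra care with the degenerate cases $n\leq 2$ is a small, harmless addition beyond what the paper records.
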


Under this hypothesis the length is exactly equal to the fundamental number.
In this paper we provide some examples of decks: all of them have $\ell\leq\Delta_n$. So we propose the following:

\begin{question}\label{q1} We strongly suspect that the length should always be smaller than or equal to the fundamental number, but is it true? Can anybody exhibit a deck with length $\ell>\Delta_n$?
\end{question}

\section{Symmetric decks}

In literature, some authors (see for instance \cite{Gazagnes} or \cite{Heemstra}) focused on symmetric decks. In a symmetric deck all symbols have the same multiplicity. A finite projective plane is a deck in which all symbols have the same multiplicity $m(s)=n$, where $n$ is the order of the deck.

\begin{defi}
A deck is said to be $M$-symmetric if every symbol has multiplicity equal to $M.$
\end{defi}

\begin{example}[$2$-symmetric decks]\label{2sym}
Consider the matrix
$$
    \begin{array}{ccc}
    a&b&c\\
    a&d&e\\
    b&d&f\\
    c&e&f
    \end{array}
$$
in which the values $a,b,c,d,e,f$ are different. Every row is a card and the set of $4$ cards is a $2$-symmetric deck of order $3,$ length $6$.

In a similar fashion we may construct a $2$-symmetric deck of order $n$ and length $\binom{n+1}{2},$ formed by $n+1$ cards: start with an upper diagonal $n\times n$ matrix with all $\binom{n+1}{2}$ different entries, transpose it and glue it below to get the $n+1$ cards.
\begin{figure}[H]
    \centering
   
   $
   \begin{array}{ccc}
    a_{11}&\dots&a_{1n}\\
    a_{11}&\ddots&\vdots\\
    \vdots&\ddots&a_{nn}\\
    a_{1n}&\dots&a_{nn}
    \end{array}
$ \caption{$2$-symmetric deck of order $n$}
   
\end{figure}
\end{example}

\begin{thm}[Characterisation of symmetric decks]\label{symdecks}
Let $M$ be the highest multiplicity of the deck. Then the following facts are equivalent:
\begin{enumerate}
    \item the deck is $M$-symmetric;
    \item $cn=\ell M;$
    \item $c=n(M-1)+1;$
    \item there exists a card $C$ such that every symbol on it has multiplicity $M.$
\end{enumerate}
\end{thm}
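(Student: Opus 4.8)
The plan is to establish the cyclic chain of implications $(1)\Rightarrow(2)\Rightarrow(3)\Rightarrow(4)\Rightarrow(1)$. The only tools needed are the two counting identities already in hand — the global one $\sum_{s\in S}m(s)=cn$ and the per-card one $\sum_{s\in C}m(s)=c+n-1$ from Lemma~\ref{sumsofmult} (which, subtracting $|C|=n$ and adding $1$, is the same as $c=\sum_{s\in C}(m(s)-1)+1$) — together with one elementary principle: if $k$ real numbers are each at most $M$ and their sum equals $kM$, then every one of them equals $M$. Since $M$ is by definition the largest multiplicity, $m(s)\le M$ holds for every symbol, so this principle applies both to the sum over all $\ell$ symbols and to the sum over the $n$ symbols of a single card.

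Three of the four implications are then immediate. For $(1)\Rightarrow(2)$: if every $m(s)=M$ then $cn=\sum_{s\in S}m(s)=\ell M$. For $(3)\Rightarrow(4)$: assuming $c=n(M-1)+1$, Lemma~\ref{sumsofmult} gives $\sum_{s\in C}m(s)=c+n-1=nM$ for \emph{every} card $C$, and since $C$ carries exactly $n$ symbols each of multiplicity at most $M$, the principle forces all of them to equal $M$; such a card exists because $c=n(M-1)+1\ge n+1$. For $(4)\Rightarrow(1)$: let $C$ be a card all of whose symbols have multiplicity $M$; then $c=\sum_{s\in C}(m(s)-1)+1=n(M-1)+1$, so for an arbitrary symbol $t$, which by axiom~D2 lies on some card $C'$, the identity $\sum_{s\in C'}m(s)=c+n-1=nM$ again forces every symbol on $C'$ — in particular $t$ — to have multiplicity $M$; as $t$ was arbitrary, the deck is $M$-symmetric.

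The only implication requiring a genuine extra ingredient is $(2)\Rightarrow(3)$, and it is also the place I expect to be the mild obstacle. Substituting $cn=\ell M$ into Corollary~\ref{inequality} yields $M=\frac{cn}{\ell}\le\frac{c+n-1}{n}$, hence $c\ge n(M-1)+1$; the reverse inequality $c\le n(M-1)+1$ is free from $c=\sum_{s\in C}(m(s)-1)+1$ together with $m(s)\le M$. (One could instead bypass Corollary~\ref{inequality} and prove $(2)\Rightarrow(1)$ directly, since the $\ell$ summands in $\sum_{s\in S}m(s)=\ell M$ are each $\le M$ and so all equal $M$; that closes the cycle just as well.) Apart from this step — which hides the non-negativity of the variance of the multiplicities packaged in Corollary~\ref{inequality} — the entire argument is the repeated application of the same ``equality forces extremality'' observation to the global sum and to the per-card sums of multiplicities.
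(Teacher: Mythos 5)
Your proposal is correct and uses essentially the same ingredients as the paper: the two counting identities of Lemma~\ref{sumsofmult} together with the observation that a sum of $k$ terms each at most $M$ can equal $kM$ only if every term equals $M$. The paper organizes the argument as a star around item (1) (showing $2\Rightarrow 1$ and $3\Rightarrow 1$ directly by this extremality principle, and $4\Rightarrow 3$ via the per-card identity), whereas you run the cycle $(1)\Rightarrow(2)\Rightarrow(3)\Rightarrow(4)\Rightarrow(1)$ and route $(2)\Rightarrow(3)$ through the variance inequality of Corollary~\ref{inequality}; since you yourself note that the direct extremality argument for $(2)\Rightarrow(1)$ closes the cycle just as well, the two proofs are substantively the same.
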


\begin{proof}
If the deck is $M$-symmetric, then the sum of the multiplicities of all the symbols is both $cn$ and $\ell M,$ thus $cn=\ell M.$ Moreover, the sum of the multiplicities on a given card is both $c+n-1$ and $nM,$ henceforth $c=n(M-1)+1.$ Finally, on every card every symbol has multiplicity $M.$ So the first assertion implies all the others.

Now, if $cn=\ell M,$ then the sum of the multiplicities of all symbols is $$\sum_{s\in S}m(s)=\ell M.$$ But this sum has $\ell$ terms which are all less than or equal to $M.$ Therefore all of them must be equal to $M$ and thus the deck is $M$-symmetric (and $3.$ and $4.$ follow too).

If $c=n(M-1)+1,$ arguing as above (this time using that $\sum_{s\in C}m(s)=nM$ for any card $C$) we find that every symbol on every card has multiplicity $M,$ that is the deck is $M$-symmetric (and $2.$ and $4.$ follow too).

If there exists a card $C$ on which every symbol has multiplicity $M,$ then $c=n(M-1)+1$ (and $1.$ and $2.$ follow too).
\end{proof}

One of the proposed mini-games in Dobble\footnote{Dobble is the name used in Europe for the game {\itshape Spot It!}, which is instead originally known like this in the US. The distributed game is a paired deck of order $8$ (in fact two cards are missing, there are 55 cards instead of 57, but this is, apparently, just due to commercial reasons).}, asks to lay out $9$ cards and find $3$ cards with a common symbol. The first author discovered this mini-game together with Margherita Pagano and it felt natural to wonder why such three cards exist. Fruitful discussions with Margherita have been of inspiration for the following example.

\begin{example}\label{hp} Take $n+1$ cards in a deck $D$ of order $n\geq4$, which is not $2$-symmetric. Then there exists at least one symbol belonging to at least three of the $n+1$ cards. In addition, there exists a symbol belonging to exactly one of them.
\end{example}
\begin{proof}
First of all observe that the chosen $n+1$ cards cannot form themselves a deck, indeed otherwise by the above characterization of symmetric decks they would form a $2$-symmetric deck, and one may verify that there cannot be a $2$-symmetric deck included in a deck of order $n\geq4$. Now, if we pick some cards from a deck these will always have pairwise a symbol in common, therefore the only axiom that could be contradicted is D2, that is there must be an isolated symbol. 

Now, suppose by contradiction that on the $n+1$ cards there are only symbols appearing once or twice (notice that they cannot all appear only once, since two cards have always a symbol in common). Consider then a symbol appearing only once and the card $C$ on which it appears. Then, the other $n$ cards of the pack intersect $C$ in the remaining $n-1$ symbols and thus, by the pigeonhole principle, one of the symbols must appear on the remaining cards twice, that is in total it appears three times.
\end{proof}

Notice that for $n=3$ the above argument fails: indeed the $2$-symmetric deck $$
    \begin{array}{ccc}
    a&b&c\\
    a&d&e\\
    b&d&f\\
    c&e&f
    \end{array}
$$ is included in the $3$-symmetric deck $$
    \begin{array}{ccc}
    a&b&c\\
    a&d&e\\
    b&d&f\\
    c&e&f\\
    g&a&f\\
    g&b&e\\
    g&c&d
    \end{array}
$$

\begin{corollary}
In a symmetric deck, the length is less than or equal to the fundamental number.
\end{corollary}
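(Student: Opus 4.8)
The plan is to reduce everything to the characterisation of symmetric decks (Theorem~\ref{symdecks}) together with the bound $M\le n$ of Theorem~\ref{boundsmult}, and then do a one-line substitution. First I would fix $M$ to be the highest multiplicity and assume the deck is $M$-symmetric; by Theorem~\ref{symdecks} this gives simultaneously the two identities $cn=\ell M$ and $c=n(M-1)+1$. Eliminating the number of cards $c$ between them expresses the length purely in terms of the order $n$ and the multiplicity $M$: $\ell=\frac{cn}{M}=\frac{n\bigl(n(M-1)+1\bigr)}{M}=n^2-\frac{n(n-1)}{M}$.

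Next I would bring in the two easy structural facts. By Theorem~\ref{boundsmult} every multiplicity is at most $n$, so in particular $M\le n$; and by axioms D3--D4 every card has at least two symbols, so $n\ge 2$ and hence $n-1>0$. Dividing the inequality $M\le n$ into $n(n-1)$ (legitimate since $n-1>0$) yields $\frac{n(n-1)}{M}\ge\frac{n(n-1)}{n}=n-1$. Plugging this into the closed formula above gives $\ell=n^2-\frac{n(n-1)}{M}\le n^2-(n-1)=n^2-n+1=\Delta_n$, which is precisely the claim.

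I do not expect any real obstacle: the only ingredients are the already-proved identities of Theorem~\ref{symdecks} and the inequality $M\le n$, so the argument is essentially a substitution and a monotonicity observation. The one thing I would flag alongside the proof is the equality case: the chain is tight exactly when $M=n$, i.e.\ exactly when the deck is a finite projective plane, recovering the classical value $\ell=\Delta_n$ in that situation; this also dovetails with Theorem~\ref{lenghteqfdnb}.
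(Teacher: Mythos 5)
Your proof is correct and follows essentially the same route as the paper's: both combine the identities of Theorem~\ref{symdecks} for a symmetric deck with a previously established bound and finish with a one-line substitution. The only cosmetic difference is that you eliminate $c$ and invoke $M\le n$, whereas the paper keeps $c$, writes $\ell=\frac{cn^2}{c+n-1}$, and invokes $c\le\Delta_n$ --- two bounds that are equivalent given $c=n(M-1)+1$.
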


\begin{proof}
As already observed in Corollary \ref{inequality}, we have that a deck is symmetric if and only if $$\ell=\frac{cn^2}{c+n-1}.$$
With a straightforward computation one finds that $$\ell-\Delta_n\leq\frac{n-1}{c+n-1}(c-\Delta_n)$$ and using the already proven fact that $c\leq\Delta_n$ one obtains that also $\ell\leq\Delta_n.$ 
\end{proof}

\begin{corollary}\label{msym}
In an $M$-symmetric deck the following relation holds:
$$\binom{\ell}{2}-c\binom{n}{2}=\frac{(\ell-c)-(n-M)}{2}\ell.$$
\end{corollary}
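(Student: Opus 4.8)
The plan is to interpret both sides of the claimed identity as counts of the same combinatorial object, namely the unordered pairs of symbols that are \emph{not} aligned (i.e.\ do not both lie on a common card), and then to reduce the resulting identity to the characterisation $c=n(M-1)+1$ of $M$-symmetric decks established in Theorem~\ref{symdecks}.

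First I would observe that the left-hand side $\binom{\ell}{2}-c\binom{n}{2}$ counts exactly these non-aligned pairs: $\binom{\ell}{2}$ is the number of all pairs of symbols, while, by the uniqueness of a card, every aligned pair lies on exactly one card and each card contributes $\binom{n}{2}$ such pairs, so $c\binom{n}{2}$ is the number of aligned pairs. Hence the difference is the number of non-aligned pairs, the same quantity that already appeared, as a strict inequality, in the section on the fundamental number.

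Next I would count the non-aligned pairs from the point of view of a fixed symbol. In an $M$-symmetric deck every star $E(s)$ consists of $M$ cards that pairwise meet only in $s$, so the number of distinct symbols occurring in $E(s)$ is $M(n-1)+1$; these are precisely $s$ together with the symbols aligned to it. Therefore $s$ fails to be aligned to exactly $\ell-1-M(n-1)$ symbols. Summing over all $\ell$ symbols and dividing by $2$ (each non-aligned pair being counted once from each of its two members) yields that the number of non-aligned pairs equals $\tfrac12\,\ell\bigl(\ell-1-M(n-1)\bigr)$.

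It then remains to check that $\ell-1-M(n-1)=(\ell-c)-(n-M)$, which after cancelling $\ell$ and rearranging is just $c=Mn-n+1=n(M-1)+1$; this is exactly part~$3$ of Theorem~\ref{symdecks}, available since the deck is $M$-symmetric. This last step is the only place any genuine input is used, and there is essentially no obstacle: the argument is a double count glued to an already-established structural identity. Alternatively, one can bypass the combinatorial interpretation and simply substitute $\ell=cn/M$ together with $c=n(M-1)+1$ into $\binom{\ell}{2}-c\binom{n}{2}$ and simplify; I would nonetheless prefer the counting argument because it explains \emph{why} the right-hand side has the stated shape.
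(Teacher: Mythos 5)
Your proof is correct. It is in substance the same computation as the paper's, but dressed combinatorially rather than algebraically: the paper expands $\binom{\ell}{2}-c\binom{n}{2}=\tfrac{1}{2}\bigl(\ell(\ell-1)-cn(n-1)\bigr)$, substitutes $cn=\ell M$ (characterisation 2 of Theorem \ref{symdecks}) to reach $\tfrac{\ell}{2}\bigl(\ell-1-M(n-1)\bigr)$, and then substitutes $nM=c+n-1$ (characterisation 3) to land on the right-hand side. You arrive at the very same intermediate expression $\tfrac{\ell}{2}\bigl(\ell-1-M(n-1)\bigr)$ by double-counting non-aligned pairs of symbols --- noting that each symbol's star exhausts exactly $M(n-1)$ aligned partners --- and then your closing step is literally the paper's second substitution. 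Your double count is equivalent to characterisation 2 (summing aligned partners over all symbols gives $\ell M(n-1)=cn(n-1)$), so the two arguments use exactly the same inputs; what your version buys is an interpretation of the left-hand side as the number of non-aligned pairs, which makes the shape of the formula transparent and connects it to the inequality $c\binom{n}{2}\leq\binom{\ell}{2}$ from the Uniqueness of a card lemma. Everything you assert checks out, including the identity $\ell-1-M(n-1)=(\ell-c)-(n-M)\iff c=n(M-1)+1$.
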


\begin{proof}
Characterizations $2.$ and $3.$ of $M$-symmetric decks provide the wished result: $$\binom{\ell}{2}-c\binom{n}{2}=\frac{\ell(\ell-1)-cn(n-1)}{2}\stackrel{2.}{=}\frac{\ell(\ell-1)-\ell M(n-1)}{2}=$$$$\frac{\ell-1-M(n-1)}{2}\ell\stackrel{3.}{=}\frac{\ell-1-(c+n-1)+M}{2}\ell=\frac{(\ell-c)-(n-M)}{2}\ell.$$
\end{proof} 

In conclusion to this section, we ask in which orders $n$ we may have an $M$-symmetric deck. An answer is provided by putting together the symmetry characterizations $2.$ and $3.$ above, from which we obtain that $\ell M=n^2(M-1)+n,$ that is $$n^2=n\mod{M}.$$
Therefore, the orders $n$ in which an $M$-symmetric deck can exist are the idempotent elements in $\numberset{Z}/M\numberset{Z}.$
Equivalently, we can state that if $n$ is not idempotent modulo $M,$ then there cannot exist an $M$-symmetric deck of order $n.$ For instance, if $M$ is a prime number, then $\numberset{Z}/M\numberset{Z}$ is the field with $M$ elements and thus the only idempotent elements are $0$ and $1$: in this case, there cannot exist an $M$-symmetric deck of order $n$ with $n\neq 0,1\mod{M}.$

\section{Paired decks and finite projective planes}\label{paireddecks}

Recall that a deck is called {\itshape paired} if any two symbols are on exactly one card and that this is equivalent to the relation $c\binom{n}{2}=\binom{\ell}{2}.$

With the following two lemmas we prove that a paired deck is always symmetric.

\begin{lemma}
A deck of order $n$ in which the number of cards is exactly the fundamental number is $n$-symmetric.
\end{lemma}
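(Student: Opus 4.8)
The plan is to combine the characterisation of symmetric decks (Theorem~\ref{symdecks}) with the upper bounds already established for multiplicities and for the number of cards. Write $M$ for the maximum multiplicity of the deck. First I would recall that $M\le n$ by Theorem~\ref{boundsmult}, and that the two-sided estimate $n+1\le c\le n(M-1)+1\le\Delta_n$ holds (part~1 of the theorem on bounds for the number of cards). Feeding the hypothesis $c=\Delta_n$ into the right half of this chain gives $n(n-1)+1=\Delta_n=c\le n(M-1)+1$, hence $n-1\le M-1$, i.e. $M\ge n$. Together with $M\le n$ this pins down $M=n$ exactly.

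Once $M=n$ is known, the hypothesis rewrites as $c=n^2-n+1=n(n-1)+1=n(M-1)+1$, which is precisely condition~3 in the equivalence of Theorem~\ref{symdecks}. Invoking the implication $3\Rightarrow 1$ of that theorem, the deck is $M$-symmetric, and since $M=n$ this means the deck is $n$-symmetric, as claimed.

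Alternatively, one could route through the Corollary stating that a deck of order $n$ with at least $n^2-2n+2$ cards has a symbol of multiplicity $n$ on every card: since $\Delta_n=n^2-n+1\ge n^2-2n+2$, this immediately gives a symbol of multiplicity $n$, hence $M=n$ (again using $M\le n$), after which the same computation $c=n(M-1)+1$ together with Theorem~\ref{symdecks} finishes the argument. I do not expect any genuine obstacle here: the only point worth noticing is that the maximality of $c$ (its being equal to $\Delta_n$) simultaneously saturates the inequality $c\le n(M-1)+1$ and, in the presence of $M\le n$, forces $M$ to its largest possible value, after which the symmetric-deck characterisation does the rest.
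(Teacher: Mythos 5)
Your proof is correct and takes essentially the same route as the paper's. The paper argues directly that $\sum_{s\in C}(m(s)-1)=c-1=n(n-1)$ is a sum of $n$ terms each at most $n-1$, forcing $m(s)=n$ throughout; this is exactly the saturation of the inequality $c\le n(M-1)+1$ (together with $M\le n$) that you exploit before handing the conclusion off to characterisation 3 of Theorem \ref{symdecks}.
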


\begin{proof}
For any card $C$ we have that $\sum_{s\in C}(m(s)-1)=c-1.$ Moreover, by hypothesis $$c-1=\Delta_n-1=n(n-1).$$ Since in the sum there are $n$ terms which are all less than or equal to $n-1,$ the equality holds if and only if $m(s)=n$ for all symbols, thus the deck is $n$-symmetric. 
\end{proof}

\begin{lemma}
A paired deck is $n$-symmetric.
\end{lemma}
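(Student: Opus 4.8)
The plan is to combine the definition of a paired deck with the previous lemma. Recall that a deck is paired exactly when $c\binom{n}{2}=\binom{\ell}{2}$, and by the ``Uniqueness of a card'' lemma this is the equality case saying every pair of distinct symbols lies on exactly one card. So in a paired deck I would first establish that the number of cards equals the fundamental number, $c=\Delta_n=n^2-n+1$, and then invoke the lemma just proved (a deck of order $n$ with $c=\Delta_n$ is $n$-symmetric) to conclude.

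To carry this out, first I would fix an arbitrary card $C=\{s_1,\dots,s_n\}$ and count, in two ways, the pairs of symbols that lie on a common card and contain at least one element of $C$. On one hand, by the paired property every symbol $s\in S$ is aligned with every other symbol, so each $s_i$ is aligned with all $\ell-1$ other symbols; since any two of the $s_i$ are aligned only via $C$ itself, the number of pairs meeting $C$ is $n(\ell-1)-\binom{n}{2}$ (subtracting the double-counted pairs inside $C$). On the other hand, using the star decomposition from the introduction: the pairs containing $s_i$ correspond to the symbols in the star $E(s_i)$ other than $s_i$, of which there are $m(s_i)(n-1)$; summing over $i$ and again removing the $\binom{n}{2}$ pairs internal to $C$ gives $\sum_{s\in C} m(s)(n-1)-\binom{n}{2}$. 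But by Lemma~\ref{sumsofmult}, $\sum_{s\in C}m(s)=c+n-1$, so this count equals $(c+n-1)(n-1)-\binom{n}{2}$. Equating the two expressions and simplifying yields $\ell-1=\frac{(c+n-1)(n-1)}{n}$... actually, a cleaner route is simply: paired means each symbol is aligned to all $\ell-1$ others, so by the star count $m(s)(n-1)+1=\ell$ for \emph{every} $s$; then $\sum_{s\in C}m(s)=n\cdot\frac{\ell-1}{n-1}$, and comparing with $c+n-1$ gives $\ell$ in terms of $c$, while also every $m(s)=\frac{\ell-1}{n-1}$ is already constant, so the deck is symmetric with common multiplicity $M=\frac{\ell-1}{n-1}$.

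Given that, I would then show $M=n$. From Theorem~\ref{symdecks} characterisation $3$, an $M$-symmetric deck satisfies $c=n(M-1)+1$; combined with $\ell=M(n-1)+1$ and the paired relation $c\binom{n}{2}=\binom{\ell}{2}$, substitute to get an equation in $M$ and $n$ whose only admissible solution is $M=n$ (the other root being degenerate, e.g.\ $M=1$ or something violating $M\ge 2$). Equivalently, and more slickly, Corollary~\ref{msym} gives $\binom{\ell}{2}-c\binom{n}{2}=\frac{(\ell-c)-(n-M)}{2}\ell$; the paired hypothesis forces the left side to be zero, hence $(\ell-c)=(n-M)$, and plugging $\ell=M(n-1)+1$, $c=n(M-1)+1$ into $\ell-c=n-M$ reduces to an identity that pins down $M=n$ (indeed $\ell-c=(M-n)(n-1)+(n-1)-(n-1)$... the arithmetic collapses to $(M-n)(n-1)=-(M-n)$, i.e.\ $(M-n)n=0$, so $M=n$).

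The main obstacle is really just bookkeeping: making sure the double-counting of intra-$C$ pairs is handled consistently, and checking that the final quadratic/linear relation in $M$ genuinely has $M=n$ as its unique solution in the valid range $2\le M\le n$ rather than admitting a spurious extra root. I expect no conceptual difficulty — the result is essentially immediate once one notices that ``paired'' upgrades the star count to an equality $m(s)(n-1)+1=\ell$ uniformly, which already delivers symmetry, after which Theorem~\ref{symdecks} and a short computation give $M=n$.
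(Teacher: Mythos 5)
Your main line of argument is correct, and the first half is genuinely different from (and arguably cleaner than) the paper's. The paper proves symmetry by contraposition: if $m(s)<n$ for some $s$, a card $C\notin E(s)$ meets the $m(s)<n$ cards of the star in distinct symbols, so some symbol of $C$ is never aligned with $s$ and the deck is not paired; it then pins down $M=n$ by solving the system $cn=\ell M$, $c+n=\ell+M$. You instead observe that pairedness makes every star exhaust the symbol set, so $m(s)(n-1)+1=\ell$ holds \emph{uniformly}, giving symmetry with $M=\frac{\ell-1}{n-1}$ in one line; your first route to $M=n$ — substituting $c=n(M-1)+1$ and $\ell=M(n-1)+1$ into $cn(n-1)=\ell(\ell-1)$ — does work, since after dividing by $n-1$ the equation factors as $(n-M)(M-1)(n-1)=0$ and $M,n\geq 2$ forces $M=n$.

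However, your ``slicker'' alternative via Corollary~\ref{msym} is broken. Pairedness does give $\ell-c=n-M$, but this relation is \emph{automatically} implied by the two identities you already hold: $\ell-c=M(n-1)+1-\bigl(n(M-1)+1\bigr)=n-M$ identically in $M$ and $n$. So that equation carries no new information and cannot pin down $M$; your arithmetic leading to $(M-n)n=0$ is an error (the correct simplification is $n-M=n-M$). The moral is that you need the full quadratic constraint $cn(n-1)=\ell(\ell-1)$, not just the linear consequence recorded in Corollary~\ref{msym}. Keep the first finishing argument and drop the second, and the proof is complete.
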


\begin{proof}
Suppose that the deck is not symmetric, then there exists a symbol $s$ such that $m(s)<n.$ Let $C$ be a card not containing $s.$ If all the symbols on $C$ appeared together with $s,$ this would occur on $n$ different cards, that is $m(s)\geq n,$ which contradicts the assumption $m(s)<n.$ Therefore there is a symbol on $C$ that does never appear together with $s,$ that is the deck is not paired.
So we proved that a paired deck is symmetric. In particular it holds the equivalent condition $cn=\ell M.$ Moreover, $c+n=\ell+M$ by Corollary \ref{msym}. The system given by the two equations is uniquely solved by $\ell=c$ and $M=n,$ henceforth a paired deck is $n$-symmetric, as claimed.
\end{proof}

\begin{thm}[Characterization of paired decks]
The following facts for a deck of order $n$ are equivalent:
\begin{enumerate}
    \item the number of cards is the fundamental number;
    \item the deck is $n$-symmetric;
    \item the deck is paired.
\end{enumerate}
\end{thm}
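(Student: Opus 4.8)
The plan is to close a short cycle of implications, using the two lemmas just established together with Theorem~\ref{symdecks} (characterisation of symmetric decks) and Corollary~\ref{msym}. The first of the two preceding lemmas is precisely the implication $1\Rightarrow 2$ (a deck of order $n$ with $c=\Delta_n$ cards is $n$-symmetric), and the second preceding lemma is precisely $3\Rightarrow 2$ (a paired deck is $n$-symmetric). So the only genuinely new work is to show that $n$-symmetry, i.e.\ condition~$2$, implies each of the other two conditions; together with the two lemmas this yields the three-way equivalence $1\Leftrightarrow 2\Leftrightarrow 3$.

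For the implication $2\Rightarrow 1$, suppose the deck is $n$-symmetric, so that $M=n$ is its highest multiplicity; characterisation~$3.$ of Theorem~\ref{symdecks} then yields $c=n(M-1)+1=n(n-1)+1=\Delta_n$, which is exactly condition~$1$. For $2\Rightarrow 3$, I would instead invoke characterisation~$2.$ of Theorem~\ref{symdecks}, which gives $cn=\ell M=\ell n$ and hence $\ell=c$; since moreover $n-M=0$, the numerator on the right-hand side of the identity in Corollary~\ref{msym},
$$\binom{\ell}{2}-c\binom{n}{2}=\frac{(\ell-c)-(n-M)}{2}\,\ell,$$
vanishes, so $c\binom{n}{2}=\binom{\ell}{2}$ and the deck is paired by definition.

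I expect no serious obstacle: every ingredient is already in place, and each step is a one-line consequence of an earlier result. The only point requiring a little care is the logical bookkeeping — one should check that the two new implications $2\Rightarrow 1$ and $2\Rightarrow 3$ rest only on Theorem~\ref{symdecks} and Corollary~\ref{msym}, and not on the statement being proved, so that the resulting web of implications $1\Rightarrow 2$, $2\Rightarrow 1$, $2\Rightarrow 3$, $3\Rightarrow 2$ is genuinely non-circular and does establish the equivalence of all three conditions.
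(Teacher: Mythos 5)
Your proof is correct and follows essentially the same route as the paper: both use the two preceding lemmas for $1\Rightarrow 2$ and $3\Rightarrow 2$, and characterisation~$3.$ of Theorem~\ref{symdecks} for $2\Rightarrow 1$. The only cosmetic difference is in $2\Rightarrow 3$, where the paper verifies the identity $\Delta_n\binom{n}{2}=\binom{\Delta_n}{2}$ directly instead of citing Corollary~\ref{msym}; both are one-line and non-circular.
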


\begin{proof}
The above lemmas assure that both $1.$ and $3.$ imply $2.$ Now, let us suppose that the deck is $n$-symmetric, then by the third characterization of symmetric decks we obtain that $c=\Delta_n,$ so $1.$ and $2.$ are equivalent and we are left to prove that they imply $3.$ Now, the first two together tell us that $c=\ell=\Delta_n$ and since $\Delta_n\binom{n}{2}=\binom{\Delta_n}{2},$ the deck is paired, as wished.
\end{proof}

A paired deck of order $n\geq 3$ satisfies the six axioms DP1,..., DP6 and thus is a finite projective plane.
Viceversa, every projective plane $\Proj^2_{\F_{p^k}}$ over any finite field $\F_{p^k}$ corresponds to a paired deck of order $n=p^k+1,$
see for example \cite{CogMay}.

\begin{example}\label{construction}
Let us give a construction for paired decks in which $n-1$ is prime,  construction which will be useful in order to obtain many more peculiar decks. Let us first do it concretely for $n=4$ and then explain how it works in general. What we will see in the end is in the spirit of the visualisation of a deck given in the first section, that is we will have $n$ blocks corresponding each to one symbol and a pivot card. Start with a $3\times 3$ matrix with all different symbols (beware that the rows are not yet the cards, indeed we are constructing a deck of order $4$):
$$
    \begin{array}{ccc}
       1  & 2&3 \\
        4 &5 &6\\
       7&8&9 
    \end{array}
$$
Transpose it:
$$
    \begin{array}{ccc}
       1  &4&7 \\
        2&5 &8\\
       3&6&9 
    \end{array}
$$
Now, starting again from the first matrix, take an element in the first row, descend of one line and advance of one column (we say that we move $1$-diagonally):
$$
    \begin{array}{ccc}
       1  & 5&9 \\
        2 &6 &7\\
       3&4&8 
    \end{array}
$$
We can do one more step: take an element in the first row, descend of one line and advance of two columns ($2$-diagonally):
$$
    \begin{array}{ccc}
       1  & 6&8\\
        2 &4 &9\\
       3&5&7
    \end{array}
$$
Now, we see that if we consider lines in different matrices, they have pairwise exactly one symbol in common (by construction!) but we notice that lines in the same matrix miss a common symbol, we should then add it in order to make them into cards:
$$
    \begin{array}{cccc}
      10& 1  & 2&3 \\
      10&  4 &5 &6\\
      10& 7&8&9 
    \end{array}
$$
$$
    \begin{array}{cccc}
     11&  1  &4&7 \\
        11&2&5 &8\\
       11&3&6&9 
    \end{array}
$$
$$
    \begin{array}{cccc}
      12& 1  & 5&9 \\
      12&  2 &6 &7\\
       12&3&4&8 
    \end{array}
$$
$$
    \begin{array}{cccc}
     13&  1  & 6&8\\
     13&   2 &4 &9\\
       13&3&5&7
    \end{array}
$$
We finally notice that we can still add our pivot card, namely the one given by the symbols which are the "centers" of each block we constructed:
$$\begin{array}{cccc}
     10&11&12&13
\end{array}$$
We thus obtained a paired deck of order $4.$
We can do the same construction for any $n$ such that $n-1$ is prime: we start with an $(n-1)\times(n-1)$ matrix with all different symbols, we transpose it and, to obtain the following matrices, each time we move one step further in choosing the next column, that is we move $k$-diagonally for every $k=1,\dots,n-2.$ We then add the common symbol for each of the $n$ obtained matrices and lastly we construct the card given by these $n$ symbols. This procedure provides us with a paired deck of order $n.$ Notice that this construction works for $n-1=p$ prime because all the numbers between $1$ and $p-1$ are coprime with $p$ and thus we always end up in different columns (otherwise we would find two symbols that were already together in the transposed the matrix). Saying it more mathematically, the method works because all the elements $1,2,\dots,p-1$ are generators of the group $\numberset{Z}/p\numberset{Z}.$

Observe also that the one described is the construction of the projective plane on the field $\F_p$ with $p=n-1$ elements. Each block constructed represents the parallel affine lines of the possible slopes (which are $0,1,\dots,p-1,\infty$), we then projectivise them adding the point at infinity in which each block of parallel lines meets and we finally add the line at infinity passing through all the points at infinity.
\end{example}

In general, what we can do to mimic this construction after the passage in which we transposed the matrix, is doing steps only of a certain pace: we will give an example and make this more precise after the following result, which tells us that there are certain values of $n$ for which there do not exist paired decks of order $n.$

\begin{thm}[Bruck-Ryser]
If $N$ is congruent to $1$ or $2$ $\mod 4$ and $N$ is not the sum of two squares, then there does not exist a projective plane of order $N.$
\end{thm}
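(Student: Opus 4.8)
The plan is to run the classical Bruck–Ryser argument through rational quadratic forms. Suppose a projective plane of order $N$ existed. By the correspondence recalled above it is a paired deck of order $n=N+1$, hence it has exactly $v:=N^2+N+1$ symbols and $v$ cards; let $A\in\{0,1\}^{v\times v}$ be its incidence matrix, rows indexed by symbols and columns by cards. Each symbol lies on $n=N+1$ cards and, by axiom DP6, any two symbols lie on exactly one common card, so
\[
 A A^{T}=N\,I_{v}+J,
\]
where $I_v$ is the identity and $J$ the all-ones matrix. In particular $\det(A)^{2}=\det(N I_v+J)=N^{\,v-1}(N+1)^{2}\neq 0$, so $A$ is invertible over $\mathbb{Q}$.

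First I would reinterpret this matrix identity as an isometry of quadratic forms over $\mathbb{Q}$. Putting $y=A^{T}x$, the identity above gives
\[
 N\sum_{i=1}^{v}x_i^{2}+\Big(\sum_{i=1}^{v}x_i\Big)^{2}=x^{T}(N I_v+J)x=\|A^{T}x\|^{2}=\sum_{i=1}^{v}y_i^{2},
\]
and since $A$ is invertible this is a genuine rational change of variables. Thus the form $q_1:=N I_v+J$ is $\mathbb{Q}$-isometric to $q_2:=I_v$, the sum of $v$ squares. Next I would simplify $q_1$ along the orthogonal splitting $\mathbb{Q}^v=\mathbb{Q}\mathbf 1\oplus\mathbf 1^{\perp}$ with $\mathbf 1=(1,\dots,1)$: on $\mathbb{Q}\mathbf 1$ one computes $q_1(t\mathbf 1)=v(N+1)^2t^2$, so that part is $\cong\langle v\rangle$ up to squares, while on $\mathbf 1^{\perp}$ one has $q_1=N\cdot(\text{standard inner product})$. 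Writing $g_0$ for the standard inner product restricted to $\mathbf 1^{\perp}$ — a form of dimension $v-1$ and discriminant $v$ — we obtain $q_1\cong\langle v\rangle\perp N g_0$, whereas $q_2=I_v\cong\langle v\rangle\perp g_0$ along the same splitting. Witt cancellation then yields the key relation $N g_0\cong g_0$ over $\mathbb{Q}$.

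Now I would read off local obstructions by comparing Hasse–Witt invariants $s_p$ (with $(\,\cdot,\cdot\,)_p$ the Hilbert symbol at a place $p$ of $\mathbb{Q}$). Using the scaling behaviour of $s_p$ together with $\dim g_0=v-1=N^2+N$ even and $\operatorname{disc}g_0=v$, one gets for every place $p$
\[
 s_p(N g_0)=s_p(g_0)\cdot (N,N)_p^{\binom{v-1}{2}}\cdot (N,v)_p .
\]
This is where the hypothesis $N\equiv 1,2\pmod 4$ enters: it is exactly the condition under which the triangular number $\tfrac{N(N+1)}{2}$ — and hence $\binom{v-1}{2}$ — is \emph{odd}. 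Since $(N,N)_p=(N,-1)_p$ and $s_p(N g_0)=s_p(g_0)$, this forces $(N,-v)_p=1$ for every place $p$. Finally, suppose $N$ is not a sum of two squares; then some prime $p\equiv 3\pmod 4$ divides $N$ to an odd power. As $v=N^2+N+1\equiv 1\pmod p$ we have $-v\equiv -1\pmod p$, so $(N,-v)_p=(N,-1)_p=\left(\tfrac{-1}{p}\right)^{v_p(N)}=-1$, contradicting the previous line. Hence $N$ would have to be a sum of two squares, contrary to hypothesis; therefore no projective plane of order $N$ exists.

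The main obstacle is that this route leans on a fair amount of standard but nontrivial machinery: the well-definedness and scaling formula of the Hasse–Witt invariant, Witt's cancellation theorem, basic properties of Hilbert symbols, and the two-squares theorem. An alternative, avoiding quadratic-form classification, is Ryser's elementary descent: keep the displayed identity $N\sum x_i^2+(\sum x_i)^2=\sum y_i^2$, note that $v+1\equiv 0\pmod 4$ in the cases at hand, adjoin one variable, group the $N$-scaled squares in fours, and apply Euler's four-square identity repeatedly to force a rational — hence integral — representation $N=a^2+b^2$; there the delicate point is getting the inductive bookkeeping of that substitution exactly right. In either approach one should double-check the single parity computation ($\binom{v-1}{2}$ odd precisely when $N\equiv 1,2\pmod 4$), since that is where, and only where, the congruence hypothesis is used.
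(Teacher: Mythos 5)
The paper does not actually prove this theorem: it states it as a known result and refers to \cite{BrRy} for the proof, so there is no in-paper argument to compare against. Your sketch is the classical Bruck--Ryser proof via rational quadratic forms, and it is correct in outline and in its key computations. The incidence identity $AA^{T}=NI_{v}+J$ with $v=N^{2}+N+1$, the resulting $\mathbb{Q}$-equivalence of the form $NI_{v}+J$ with $I_{v}$, the orthogonal splitting along $\mathbf{1}$, and Witt cancellation giving $Ng_{0}\cong g_{0}$ all check out. The two delicate points also verify: the general scaling formula gives $s_{p}(Ng_{0})=s_{p}(g_{0})\,(N,N)_{p}^{\binom{v-1}{2}}(N,v)_{p}^{v-2}$, and since $v-2$ is odd your displayed version is right; moreover $\binom{v-1}{2}=\tfrac{N(N+1)}{2}(N^{2}+N-1)$ has the parity of the triangular number $\tfrac{N(N+1)}{2}$, which is odd exactly when $N\equiv 1,2\pmod 4$, so the hypothesis enters exactly where you say it does and yields $(N,-v)_{p}=1$ at every place. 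The final evaluation at a prime $p\equiv 3\pmod 4$ dividing $N$ to odd order is also correct, using $\gcd(N,v)=1$ so that $v\equiv 1\pmod p$ and $-v$ is a nonresidue unit there. The only caveat is the one you state yourself: the argument outsources its substantive content to Witt cancellation, the well-definedness and scaling of the Hasse--Witt invariant, the product formula for Hilbert symbols, and the two-squares theorem; a self-contained proof would either develop that machinery or follow Ryser's elementary four-square descent, which you correctly identify as the alternative. Given that the paper itself only cites the result, your sketch is a legitimate and essentially standard proof of it.
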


This was proven for the first time in \cite{BrRy}. The smallest number for which the previous results don't give an answer about the existence of a paired deck of that order, is $n=11,$ but in \cite{Lam} it was proven that there does not exist a finite projective plane of order $10,$ and thus there does not exist a paired deck of order $11$ as well. There are conjectures saying that also projective planes of order $12$ do not exist, see for example \cite{Abdolhadi}.

So there are some orders in which no paired decks exist. For instance, by the above theorem, there does not exist a paired deck of order $7.$ We are nevertheless able to construct different decks of order $7$: by Example \ref{2sym} we know that there is a $2$-symmetric deck of order $7.$ Let us instead show that with the construction we described at the end of the previous example we can obtain a non-symmetric deck of order $7$:
\begin{figure}[H]
\centering
\begin{minipage}[c]{.40\textwidth}\centering
    $
\begin{array}{ccccccc}
     37&1&2&3&4&5&6  \\
     37&7&8&9&10&11&12\\
     37&13&14&15&16&17&18\\
     37&19&20&21&22&23&24\\
     37&25&26&27&28&29&30\\
     37&31&32&33&34&35&36
\end{array}$
\caption{Starting block}
\end{minipage}
\begin{minipage}[c]{.40\textwidth}\centering
$
\begin{array}{ccccccc}
     38&1&7&13&19&25&31  \\
     38&2&8&14&20&26&32\\
     38&3&9&15&21&27&33\\
     38&4&10&16&22&28&34\\
     38&5&11&17&23&29&35\\
     38&6&12&18&24&30&36
\end{array}$
\caption{Block obtained transposing}
\end{minipage}
\end{figure}
\begin{figure}[H]
\centering
$
\begin{array}{ccccccc}
     39&1&8&15&22&29&36  \\
     39&2&9&16&23&30&31\\
     39&3&10&17&24&25&32\\
     39&4&11&18&19&26&33\\
     39&5&12&13&20&27&34\\
     39&6&7&14&21&28&35
\end{array}$
\caption{Block obtained $1$-diagonally}
\end{figure}

As you see, after transposing the matrix, we did only a step of pace $1$. We obtained a non-symmetric deck of order $7,$ length $$\ell=(n-1)^2+3=39,$$ $$c=3(n-1)=18$$ cards and in which there are $(n-1)^2=36$ symbols of multiplicity $3$ and $3$ symbols of multiplicity $6$. In general, for $n-1$ not a prime number, we can construct with this method $p+1$ blocks, where $p$ is the smaller prime dividing $n-1$. In this way we obtain a non-symmetric deck of order $n,$ length $\ell=(n-1)^2+p+1,$ $c=(p+1)(n-1)$ cards and in which there are $(n-1)^2$ symbols of multiplicity $p+1$ and $p+1$ symbols of multiplicity $n-1$.

\section{Maximal decks}

In conclusion, we introduce maximal decks: these are decks to which it is not possible to add a new card without introducing new symbols.

\begin{defi}
A deck $D$ is said to be {\itshape maximal} if it is not possible to add a new card $\Gamma$ such that $D\cup\{\Gamma\}$ is still a deck. 
\end{defi}

Notice that all paired decks are maximal, indeed for a paired deck $c=\Delta_n$ and we cannot add any more cards since it always holds $c\leq\Delta_n.$ So we are interested in studying maximal decks which are not paired. 

\begin{prop}\label{maximal}
A deck $D$ is maximal if there do not exist $n$ symbols $\{s_1,\dots,s_n\}\subset S$ such that $\sum_{i=1}^nm(s_i)=c.$
\end{prop}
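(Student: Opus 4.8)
\emph{Proof idea.} The plan is to prove the contrapositive: if $D$ is not maximal, then there do exist $n$ distinct symbols $\{s_1,\dots,s_n\}\subset S$ with $\sum_{i=1}^n m(s_i)=c$. So I would fix a card $\Gamma$ witnessing the failure of maximality, i.e.\ such that $D\cup\{\Gamma\}$ is again a deck, and extract from the axioms what $\Gamma$ must look like. Axiom D4 forces $|\Gamma|=n$, and axiom D1 applied to $\Gamma$ against each card of $D$ forces $|\Gamma\cap C|=1$ for every $C\in D$.

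The step I expect to be the crux — though it is short — is the observation that $\Gamma$ cannot contain any symbol outside $S$: a symbol $t\in\Gamma\setminus S$ would lie only on $\Gamma$ in $D\cup\{\Gamma\}$, hence have multiplicity $1$ there, contradicting axiom D2. Therefore $\Gamma=\{s_1,\dots,s_n\}\subset S$ consists of $n$ distinct symbols already present in the deck. This is precisely what pins the count down to exactly $n$ symbols rather than fewer, and it is easy to overlook, since the definition of maximality only speaks of adding a card and not of keeping the symbol set fixed.

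With this in hand the conclusion is a direct counting argument in the spirit of the packs-and-stars visualisation of a deck. The stars $E(s_1),\dots,E(s_n)$ are pairwise disjoint, for a card lying in two of them would contain two symbols of $\Gamma$, contradicting $|\Gamma\cap C|=1$; and they cover $D$, since each $C\in D$ contains exactly one symbol of $\Gamma$ and so lies in exactly one $E(s_i)$. Hence $D$ is the disjoint union of the $E(s_i)$, giving $c=|D|=\sum_{i=1}^n|E(s_i)|=\sum_{i=1}^n m(s_i)$, which contradicts the hypothesis; thus $D$ is maximal. I would close by noting that only the sufficient direction is claimed: from $n$ symbols whose multiplicities merely \emph{sum} to $c$ one cannot conclude a priori that their stars partition $D$, which is what would be needed in order to use $\{s_1,\dots,s_n\}$ as a legal new card, so the converse would require an extra argument.
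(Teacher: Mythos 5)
Your proof is correct, and it takes a genuinely different route from the paper's official argument. The paper proves the contrapositive algebraically: it compares the identity $\sum_{s}m_\Gamma(s)^2=(c+1)(c+n)$ for the enlarged deck with the expansion $\sum_{s}m_\Gamma(s)^2=\sum_s m(s)^2+2\sum_{s\in\Gamma}m(s)+n=c(c+n-1)+2\sum_{s\in\Gamma}m(s)+n$, and cancels to get $\sum_{s\in\Gamma}m(s)=c$; this leans on the quadratic identity of Lemma \ref{sumsofmult}. You instead argue directly that the stars $E(s_1),\dots,E(s_n)$ are pairwise disjoint and cover $D$, so $c=\sum_i m(s_i)$ — which is exactly the ``more visual'' partition argument the authors themselves sketch in the paragraph immediately after their proof. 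Your version is more elementary (no squaring, no appeal to the second identity of Lemma \ref{sumsofmult}) and arguably more transparent. You also make explicit a point the paper passes over silently: that the new card $\Gamma$ cannot contain a symbol outside $S$, since such a symbol would have multiplicity $1$ in $D\cup\{\Gamma\}$ and violate axiom D2; this reconciles the formal definition of maximality with the informal one (``without introducing new symbols'') and justifies the paper's unstated assumption that $\Gamma\subset S$. Your closing remark that only the sufficient direction is claimed is likewise accurate.
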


\begin{proof}
Suppose that $D$ is not maximal. Then, by definition, we are able to add a card to obtain a well defined deck, that is there exist $n$ symbols $S\supset\{s_1,\dots,s_n\}=:\Gamma$ such that $|\Gamma\cap C|=1$ for every $C\in D.$ Set $D_\Gamma:=D\cup\{\Gamma\}.$ Then, we have that the multiplicities in the new deck are
$$m_\Gamma(s)=\left\{\begin{array}{lc}
    m(s) & \mbox{if }s\not\in\Gamma \\
    m(s)+1 & \mbox{if }s\in\Gamma
\end{array}\right.$$ and the number of cards is $c_\Gamma=c+1.$ Summing the squares of the multiplicities on the new deck we have that $$\sum_{s\in S}m_\Gamma(s)^2=c_\Gamma(c_\Gamma+n-1)=(c+1)(c+n)$$ but we also have that $$\sum_{s\in S}m_\Gamma(s)^2=\sum_{s\not\in\Gamma}m(s)^2+\sum_{s\in\Gamma}(m(s)+1)^2=\sum_{s\in S}m(s)^2+2\sum_{s\in\Gamma}m(s)+n=c(c+n-1)+2\sum_{s\in\Gamma}m(s)+n$$ and equalizing the two quantities we obtain $2\sum_{s\in\Gamma}m(s)=2c,$ from which the result.
\end{proof}

The above proof may be seen more visually in the following way: if the deck $D$ is not maximal, then we are able to find $n$ symbols $s_1,\dots,s_n\in S$ that pairwise do not appear on any card.
Then, the $n$ stars $E(s_i)$ give a partitioning of the deck: indeed they do not intersect by assumption (since two of the symbols never appear on the same card) and they cover all the deck since $D\cup\Gamma,$ with $\Gamma=\{s_1,\dots,s_n\},$ is still a deck, that is $\Gamma$ must intersect every other card, or in other words, one (and only one) of the $s_i$'s appears on each card of $D$. Hence, $\sum_{i=1}^nm(s_i)=c.$

We have the following immediate corollary.
\begin{corollary}
A deck $D$ is maximal if for every $n$-tuple of symbols $\{s_1,\dots,s_n\}\subset S,$ $\sum_{i=1}^nm(s_i)>c.$
\end{corollary}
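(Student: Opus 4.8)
The plan is to deduce this directly from Proposition~\ref{maximal}, of which it is a purely logical weakening. Proposition~\ref{maximal} asserts that $D$ is maximal as soon as there is no $n$-element set of symbols $\{s_1,\dots,s_n\}\subset S$ with $\sum_{i=1}^n m(s_i)=c$. So the first (and essentially only) step is to observe that the hypothesis of the corollary — namely $\sum_{i=1}^n m(s_i)>c$ for \emph{every} choice of $n$ symbols $s_1,\dots,s_n$ — forbids in particular the equality $\sum_{i=1}^n m(s_i)=c$, since no quantity can be at the same time $>c$ and $=c$. Hence the hypothesis of Proposition~\ref{maximal} is satisfied, and $D$ is maximal. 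That is the whole argument.

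The only point that deserves a line of care is matching the quantifiers. In the setting of Proposition~\ref{maximal} the symbols $s_1,\dots,s_n$ forming a would-be new card $\Gamma$ are automatically pairwise distinct (a card has $n$ distinct symbols by axioms D3 and D4), so when we say ``for every $n$-tuple of symbols'' in the corollary we mean: for every $n$-element subset $\{s_1,\dots,s_n\}$ of $S$. With this reading the two statements range over exactly the same family of $n$-subsets, and the implication ``all such sums are $>c$'' $\Rightarrow$ ``no such sum equals $c$'' is immediate.

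If one preferred a self-contained proof not citing the proposition, one could simply rerun its counting argument: assuming $D$ is not maximal, there is a new card $\Gamma=\{s_1,\dots,s_n\}$ with $|\Gamma\cap C|=1$ for all $C\in D$; comparing $\sum_{s\in S} m_\Gamma(s)^2=(c+1)(c+n)$, obtained from Lemma~\ref{sumsofmult} applied to $D\cup\{\Gamma\}$, with the expansion $\sum_{s\in S} m(s)^2+2\sum_{s\in\Gamma} m(s)+n$ forces $\sum_{i=1}^n m(s_i)=c$, contradicting the hypothesis. But this merely reproves Proposition~\ref{maximal}, so invoking it is cleaner.

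There is no real obstacle here — the statement is flagged as an ``immediate corollary'' precisely because it is one. The only things to watch are not to over-claim (the condition is sufficient, not necessary) and to keep the inequality strict: a non-strict version would coincide with, and hence add nothing to, Proposition~\ref{maximal}.
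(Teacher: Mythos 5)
Your proposal is correct and matches the paper, which states this as an immediate corollary of Proposition~\ref{maximal} without further proof: the hypothesis that every $n$-subset has multiplicity sum strictly greater than $c$ rules out any $n$-subset with sum equal to $c$, so the proposition applies directly. Your quantifier check and the optional rerun of the counting argument are both fine, though unnecessary.
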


\begin{prop}
Symmetric decks are maximal.
\end{prop}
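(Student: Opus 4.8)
The plan is to deduce this immediately from the corollary to Proposition \ref{maximal} together with the characterisation of symmetric decks. Suppose the deck is $M$-symmetric. Then every symbol has multiplicity exactly $M$, so for any $n$-tuple of symbols $\{s_1,\dots,s_n\}\subset S$ one has $\sum_{i=1}^n m(s_i)=nM$; note that such an $n$-tuple exists since every card already exhibits $n$ distinct symbols, hence $\ell\geq n$. By characterisation $3.$ of Theorem \ref{symdecks}, the number of cards is $c=n(M-1)+1$.

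Next I would simply compare the two quantities: $$\sum_{i=1}^n m(s_i)-c=nM-\bigl(n(M-1)+1\bigr)=n-1.$$ Since the order of the deck satisfies $n\geq 2$ (every card contains at least two symbols by axiom D4, or D3), we get $n-1\geq 1>0$, so $\sum_{i=1}^n m(s_i)>c$ for every choice of $n$ symbols. The corollary to Proposition \ref{maximal} then yields at once that $D$ is maximal.

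There is essentially no hard step here; the only thing to be a little careful about is invoking the right direction of Theorem \ref{symdecks} (one needs $c=n(M-1)+1$, which is characterisation $3.$, not the relation $cn=\ell M$) and recording that $n\geq 2$ so that the inequality is strict rather than an equality. One may also phrase the argument in the equivalent, more visual form used after Proposition \ref{maximal}: if the deck were not maximal there would be $n$ pairwise non-aligned symbols whose stars partition the deck, forcing $c=\sum_i m(s_i)=nM$, which contradicts $c=n(M-1)+1$ as soon as $n\geq 2$.
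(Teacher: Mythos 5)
Your proof is correct and is essentially identical to the paper's: both compute $\sum_{i=1}^n m(s_i)=nM$ for any $n$ symbols, use $c=n(M-1)+1=nM-n+1$ from the characterisation of symmetric decks, and conclude $nM>c$ via the corollary to Proposition \ref{maximal}. Your version merely makes explicit the bookkeeping ($n\geq 2$, existence of an $n$-tuple) that the paper leaves implicit.
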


\begin{proof}
For every $n$-tuple of symbols in an $M$-symmetric deck $$\sum_{i=1}^nm(s_i)=Mn>c=Mn-n+1.$$
\end{proof}

\begin{example}
In any order $n,$ there exist non-symmetric maximal decks.
\end{example}

\begin{proof}
We will deal separately the case in which $n-1$ is prime from the general one.
Let us start supposing that $n-1$ is prime. Recall the construction of Example \ref{construction}. If we take any $k$ blocks, with $2\leq k\leq n-1,$ we obtain a maximal deck. The fact that it is a deck holds true since every time you remove cards from a deck (without leaving some isolated symbols) you still obtain a deck. In this deck there are $\ell=\Delta_n-(n-k)$ symbols, precisely $k$ symbols of multiplicity $n-1$ and $(n-1)^2$ symbols of multiplicity $k,$ and $c=(n-1)k$ cards. Therefore, for any $n$ symbols picked we have that $$\sum_{i=1}^nm(s_i)\geq kn>k(n-1)=c.$$ For $k=n-1,$ what we obtain is an $(n-1)$-symmetric deck of order $n,$ while for all the other choices of $k$ the deck constructed is maximal but not symmetric.
Finally, notice that in all the blocks that we did not consider there are new symbols, and thus we cannot re-add the same cards of the construction of Example \ref{construction} without introducing new symbols.

Suppose now that $n-1$ is not prime. Then, we already saw that with our usual construction we obtain a deck that we partition in $p+1$ blocks where $p$ is the smaller prime dividing $n-1$, that is we have $p+1$ symbols of multiplicity $n-1$ and $(n-1)^2$ symbols of multiplicity $p+1$ and moreover the number of cards is $c=(n-1)(p+1).$ This is a maximal deck: indeed for any $n$ symbols chosen $$\sum_{i=1}^nm(s_i)\geq n(p+1)>c.$$ Notice that the same holds true if we choose $k$ of the $p+1$ block with $2\leq k\leq p+1.$

Observe moreover that these are examples of decks with only two multiplicities and with the formulas of Example \ref{2mult} we recover that on each card there is exactly one symbol of maximal multiplicity and $n-1$ symbols of minimal multiplicity, as expected by the previous computation.
\end{proof}

Let us conclude with a remark on the above example. The maximal decks constructed for $n-1$ a prime power $p^k$ are included, up to adding the necessary symbols, in $n$-paired decks, which are projective planes on $\F_{p^k}$ (the construction of which is well explained for example in \cite{CogMay}). On the other hand, the ones obtained in the non-prime-power case are totally new!

\section{Conclusions and questions}

In conclusion, we may say that there is a hierarchy in the set of {\itshape Spot It!} decks we analyzed: finite projective planes coincide with paired, or equivalently $n$-symmetric, decks and their set is strictly included in the one of symmetric decks, which is in turn strictly included in the set of maximal decks, included in the set of generic ones.

In literature it was known that finite projective planes correspond to some decks and that these cannot exist for certain orders. In this paper we pointed out that there is a bijection between finite projective planes and paired decks. We also characterized symmetric decks and introduced maximal decks, providing a sufficient condition of maximality. We moreover showed examples of non-paired symmetric decks and examples of non-symmetric maximal decks.
In addition to Question \ref{q1}, here are some other queries which remain open for us. 

\begin{question}
In this paper we focused on the existence of certain decks, while we did not touch the question of uniqueness: is it true that order, number of cards and length $(n,c,\ell)$ determine uniquely a deck? Or do there exist substantially different decks (meaning for example that the multiplicities of the symbols are differently distributed) with the same triple?
\end{question}

Fun fact: it is very common for games to allow expansion packs. As Matteo Silimbani pointed out to us, we proved that there cannot exist an expansion of the game Dobble (up to adding the two missing cards, of course), indeed, the deck complete of its $57$ cards is a maximal deck and thus cannot be expanded to a bigger deck.

\renewcommand{\abstractname}{Acknowledgements} \begin{abstract}The first author would like to thank Fabio Buccoliero, Margherita Pagano and Francesco Viganò for the fruitful discussions on this fun-math topic. We are also grateful to them for their useful remarks on a first draft of this paper. The second author would like to thank Alberto Saracco and Matteo Silimbani, because in a sunny spring day in April 2021 they started an engaging discussion on Facebook about {\itshape Spot It!}; we also thank them for their useful remarks on a first draft of this paper. We both thank our beloved mother and wife, Monica Conte, for the idea of the construction of some paired decks (see Example \ref{construction}). \end{abstract}

\bibliographystyle{alpha}
\bibliography{bib}

\end{document}